\begin{document}

\title{A spectral approach to non-linear weakly singular fractional integro-differential equations}
\author{Amin Faghih$^{1}$ \and Magda Rebelo$^{2^*}$      
}
\institute{$1$ Contributing author (Amin Faghih)\\
Department of Applied Mathematics, Faculty of Basic Sciences, Sahand University of Technology, Tabriz, Iran
              \email{amin.fagheh71@gmail.com}\\
$2^*$ Corresponding Author (Magda Rebelo)\\
Center for Mathematics and Applications (NovaMath) and Department of Mathematics, FCT NOVA, Quinta da Torre, 2829-516, Caparica, Portugal
              \email{msjr@fct.unl.pt}
             }

\maketitle
\begin{abstract}
 In this work, a class of non-linear weakly singular fractional integro-differential equations is considered, and we first prove existence, uniqueness, and smoothness properties of the solution under certain assumptions on the given data. We propose a numerical method based on spectral Petrov-Galerkin method that handling to the  non-smooth behavior of the solution.
  The most outstanding feature of our approach is to evaluate the approximate solution by means of recurrence relations despite solving complex non-linear algebraic system. Furthermore, the well-known exponential accuracy is established in $L^{2}$-norm, and we provide some  examples to illustrate the theoretical results and the performance of the proposed method.
\keywords{Weakly singular fractional integro-differential equation; Caputo derivative operator;
Generalized Jacobi polynomials; Spectral Petrov-Galerkin method; Convergence.}
\subclass{45E10, 45J05, 34K37,33C45, 65D15. }
\end{abstract}
\section{Introduction}
The subject of fractional calculus has recently gained significant popularity and importance, due mainly to its memory features and demonstrated applications in numerous seemingly diverse and widespread fields of science and engineering. For further details, readers are relegated to the books (\cite{fo2}, \cite{39} ,\cite{30},\cite{fo1}, \cite{31}, \cite{r2-2}) and review papers (\cite{ref1}, \cite{ref2}, \cite{ref3}).

There is no generally applicable method to find an analytic solution to an arbitrary given fractional-integro differential equation (FIDE). That is why effective numerical methods can help overcome the problems caused by the shortage of analytical methods for the computation of solutions to FIDEs. Various kinds of approximate methods have independently appeared for the numerical solution of FIDEs along with smooth kernel function, such as Quadratic method \cite{1}, spline collocation method \cite{3,spl}, differential transform method \cite{8}, Legendre wavelet method \cite{9}, second Chebyshev wavelet method \cite{sc}, Laguerre collocation method \cite{11}, Jacobi collocation method \cite{14}, Taylor expansion method \cite{15}, Legendre collocation method \cite{16} and hybrid collocation method \cite{17}.

Weakly singular FIDEs seem to be investigated less frequently than FIDEs associated with smooth kernel function. For instance, alternative Legendre polynomials method \cite{10}, Jacobi Tau method \cite{24}, second kind Chebyshev spectral method \cite{21}, second kind Chebyshev wavelet method \cite{23}, and piecewise polynomial collocation method \cite{22} have introduced for the numerical solution of linear weakly singular FIDEs. Moreover, the most frequently used methods for obtaining the approximate solutions of non-linear weakly singular FIDEs are Legendre wavelet method \cite{20}, shifted Jacobi collocation method \cite{19}, and hat functions method \cite{18}. The vast majority of the aforementioned schemes lack The following critical properties playing an active role in constituting an effective numerical approach:
\begin{itemize}
  \item[$\bullet$] Absence of comprehensive analysis, including existence, uniqueness of the solution, and a vigorous smoothness survey which is crucial in establishing exponentially accurate spectral methods.
  \item[$\bullet$] Obtaining approximations through solving complicated non-linear algebraic systems along with high computational costs.
   Such snag drives numerical schemes toward generating low accurate approximate solutions to the equations arising from real-world phenomena, which are mainly regarded on the long domain.
  \item[$\bullet$] Lack of consistency between non-smooth behavior of the exact solution and basis functions that addresses researchers to deal with infinitely smooth basis functions such as polynomials.
      Indeed, generating a consistency between the degree of smoothness of the exact solution and the asymptotic behavior of the basis functions makes it possible to create high-order methods.
\end{itemize}

‎In this paper‎, the goal is to present a comprehensive investigation taking the policy of tackling the above difficulties to the following non-linear weakly singular FIDE
‎\begin{equation}\label{eq1}‎
‎\begin{cases}‎
‎D^{\alpha}_C y(t)=f(t,y(t))+\lambda \int_{0}^{t}(t-s)^{{\beta}-1}g(t,s,y(s))ds,\quad ‎t \in \Lambda=[0,T],\\‎
y^{(k)}(0)=y_{0}^{(k)},~k=0,1,\ldots‎, ‎\lceil \alpha \rceil-1,‎ ~ \quad \quad \quad \quad \alpha >0, \quad 0 < \beta \leq 1,
‎\end{cases}‎
‎\end{equation}‎
where $\lambda \in \mathbb{R}$, $\alpha=\frac{a_{1}}{b_{1}}$, $\beta=\frac{a_{2}}{b_{2}}$ along with $a_{i} \geq 1$, $b_{i} \geq2$, $i=1,2$, and $(a_{i},b_{i})$ are the pairs of co-prime integers.\\
Here $\lceil.\rceil$ is the ceiling function, and $T$ is a finite positive real value. $f(t, y(t)): \Lambda \times \mathbb{R} \rightarrow \mathbb{R}$ and $g(t, s, y(s)): D \times  \mathbb{R} \rightarrow \mathbb{R}$, with $\displaystyle D=\left\{(t,s) \in \Lambda\times \Lambda:\, 0\leq s\leq t\leq T\right\}$ are continuous functions, and $y(t): \Lambda \rightarrow \mathbb{R}$ is the unknown. $D_{C}^{\alpha}$ is Caputo fractional derivative of order $\alpha$ defined by
‎\begin{equation*}‎
‎D^{\alpha}_C (.)=I^{\lceil \alpha \rceil-\alpha}\partial_t^{\lceil \alpha \rceil}(.)‎,
‎\end{equation*}‎
‎in which $I^{\lceil \alpha \rceil-\alpha}$ denotes the Riemann-Liouville fractional integral operator of order $\lceil \alpha \rceil-\alpha$‎
 ‎\cite{39,30,31}‎.

In this respect, a comprehensive investigation regarding the existence, uniqueness, and smoothness properties of \eqref{eq1} are provided on the one side, and then an efficient spectral scheme is implemented to \eqref{eq1} thanks to the fractional generalized Jacobi functions (FGJFs) introduced in \cite{sevom}. Needless to direct that spectral methods offer highly accurate approximations for smooth problems. However, some cons are still existed, including the requirement of solving ambiguous and ill-conditioned algebraic systems and the striking decline in the accuracy of the approximations facing the problems with non-smooth solutions. Contrary to all cons, the numerical strategy in this essay is taken whereby it contributes to both spectral accuracy in attacking non-smooth solution and evaluating approximate solution by means of recurrence relations despite solving non-linear complex algebraic systems.

This paper is arranged in the following way. We begin by presenting theorems of existence, uniqueness, and smoothness. This analysis confirms that some derivatives of the solution have probably a discontinuity at the initial point. Subsequently, in Section \ref{sec2}, we first provide the essential concepts and definitions of generalized Jacobi polynomials (GJPs) and FGJFs, and then a state-of-the-art Petrov-Galerkin method is implemented to deal with \eqref{eq1}. Numerical solvability and practical implementation of the relevant non-linear algebraic system are examined as well. In particular, the error estimate is mightily surveyed in Section \ref{sec3}. Section \ref{sec4} includes some prototype examples to assess the efficiency and applicability of the introduced method. Section \ref{sec5} ultimately presents concluding remarks.
\section{Existence, uniqueness and smoothness}\label{sec1}
We allocate this section to the existence, uniqueness, and smoothness properties of the solution of \eqref{eq1}. We first provide the following theorem regarding the existence and uniqueness of the solution.

Let $\displaystyle{\psi(t)=\sum_{k=0}^{\lceil\alpha\rceil-1} \frac{t^{k}}{k !} y_{0}^{(k)}} $. Define the set
$$\Omega_{\zeta}=\left\{ y\in C([0,T]):\,  \|y-\psi\|_{\Lambda}\leq \zeta   \right\},$$ where
$\displaystyle \|.\|_{\Lambda}=\max_{t\in \Lambda=[0,T]}|z(t)|$ for all $z\in C(\Lambda)$ and
\begin{equation}\label{defgamma}
\zeta=
\frac{\|f\|_{\Lambda} T^{\alpha}}{\Gamma(\alpha+1)}+\frac{\|g\|_{\Lambda}\lambda \Gamma(\beta)T^{\alpha+\beta}}{\Gamma(1+\alpha+\beta)}.
\end{equation}

Through applying the Riemann-Liouville fractional integral operator of order $\alpha$, \eqref{eq1} is changed into the weakly singular Volterra integral equation
\begin{equation}\label{r3}
y(t)=\psi(t)+I^{\alpha} f(t, y(t))+\lambda I^{\alpha}\left(\int_{0}^{t}(t-s)^{\beta-1} g(t, s, y(s)) d s\right).
\end{equation}

At this stage, we define an operator $\mathcal{T}_{\psi}$, on $\Omega_{\zeta}$, by
\begin{eqnarray*}
\nonumber \mathcal{T}_{\psi}\left( \varphi\right)(t)&=&\psi(t)+I^{\alpha} f(t, \varphi(t))+\lambda I^{\alpha}\left(\int_{0}^{t}(t-s)^{\beta-1} g(t, s, \varphi(s)) d s\right)\\ \nonumber
&=& \psi(t)+
\frac{1}{\Gamma(\alpha)}\int_0^t(t-s)^{\alpha-1}f(s,\varphi(s))
ds \\
\label{defop} &+&\frac{\lambda}{\Gamma(\alpha)} \int_0^t (t-\tau)^{\alpha-1}\left(\int_{0}^{\tau}(\tau-s)^{\beta-1} g(\tau, s, \varphi(s)) d s\right).
\end{eqnarray*}
Using this operator, the equation \eqref{eq1} can be rewritten as $\displaystyle y=\mathcal{T}_{\psi}\left( y\right)$. Thereby, if the operator $\mathcal{T}_{\psi}$ has a unique fixed point on $\Omega_{\zeta}$, \eqref{eq1} will possess a unique continuous solution.

\vspace*{0.5cm}
\begin{theorem}[Existence and uniqueness]\label{thm3}
Let $\displaystyle D_1= \Lambda \times [m_{\psi}-\zeta,M_{\psi}+\zeta]$ and $\displaystyle D_2= D \times [m_{\psi}-\zeta,M_{\psi}+\zeta],$ where $\displaystyle m_{\psi}=\min_{t\in \Lambda}|\psi(t)|$, $\displaystyle  M_{\psi}=\max_{t\in \Lambda}|\psi(t)|$ and $\zeta$ is defined by \eqref{defgamma}.

Assume that the functions  $f(s, y(s)): D_1  \rightarrow \mathbb{R}$  and $g(t, s, y(s)): D_2  \rightarrow \mathbb{R}$ are continuous for all $s\in \Lambda$ and further assume that the functions $f$ and $g$ fulfills Lipschitz conditions with respect to the second and third variables, respectively. Therefore, there exists $L_1>0$ and $L_2>0$ such that
\begin{eqnarray}
\nonumber
&& \lvert f\left(t,z_1\right)-f\left(t,z_2\right) \rvert  \leq L_{1}\lvert z_1-z_2\rvert,\quad \quad ~  \forall z_1,z_2 \in \Omega_{\zeta},\\
\nonumber
&& \lvert g\left(t,s,z_1\right)-g\left(t,s,z_2\right) \rvert \leq L_{2}\lvert z_1-z_2\rvert ,\quad \forall z_1,z_2 \in \Omega_{\zeta}.
\end{eqnarray}
Then, \eqref{eq1} owns a unique continuous solution on $\Lambda$.
\end{theorem}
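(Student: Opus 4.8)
The plan is to recast \eqref{eq1} as the fixed-point problem $y=\mathcal{T}_{\psi}(y)$ on the set $\Omega_{\zeta}$ and apply a contraction argument. Note first that $\Omega_{\zeta}$ is a nonempty (it contains $\psi$) closed ball of the Banach space $\bigl(C([0,T]),\|\cdot\|_{\Lambda}\bigr)$, hence a complete metric space, and that, by the equivalence of \eqref{eq1} with the weakly singular Volterra equation \eqref{r3}, any fixed point of $\mathcal{T}_{\psi}$ in $\Omega_{\zeta}$ is a continuous solution of \eqref{eq1} and conversely. The first step is to check that $\mathcal{T}_{\psi}$ is well defined on $\Omega_{\zeta}$: for $\varphi\in\Omega_{\zeta}$ one has $\varphi(t)\in[m_{\psi}-\zeta,M_{\psi}+\zeta]$, so $f(t,\varphi(t))$ and $g(t,s,\varphi(s))$ are continuous and bounded by $\|f\|_{\Lambda}$, $\|g\|_{\Lambda}$ on the compact sets $D_{1}$, $D_{2}$; since the kernels $(t-s)^{\alpha-1}$ and $(t-s)^{\beta-1}$ are integrable for $\alpha>0$, $0<\beta\le 1$, the two Riemann--Liouville-type integrals in the definition of $\mathcal{T}_{\psi}$ depend continuously on $t$, so $\mathcal{T}_{\psi}(\varphi)\in C([0,T])$.

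Next I would verify the self-mapping property $\mathcal{T}_{\psi}(\Omega_{\zeta})\subseteq\Omega_{\zeta}$. For $\varphi\in\Omega_{\zeta}$,
$$\bigl|\mathcal{T}_{\psi}(\varphi)(t)-\psi(t)\bigr|\le\frac{\|f\|_{\Lambda}}{\Gamma(\alpha)}\int_{0}^{t}(t-s)^{\alpha-1}ds+\frac{|\lambda|\,\|g\|_{\Lambda}}{\Gamma(\alpha)}\int_{0}^{t}(t-\tau)^{\alpha-1}\!\left(\int_{0}^{\tau}(\tau-s)^{\beta-1}ds\right)d\tau.$$
Using $\int_{0}^{\tau}(\tau-s)^{\beta-1}ds=\tau^{\beta}/\beta$, the beta-integral identity $\frac{1}{\Gamma(\alpha)}\int_{0}^{t}(t-\tau)^{\alpha-1}\tau^{\beta}\,d\tau=\frac{\Gamma(\beta+1)}{\Gamma(\alpha+\beta+1)}\,t^{\alpha+\beta}$, the relation $\Gamma(\beta+1)=\beta\,\Gamma(\beta)$, and $t\le T$, the right-hand side collapses to exactly the constant $\zeta$ of \eqref{defgamma}; hence $\|\mathcal{T}_{\psi}(\varphi)-\psi\|_{\Lambda}\le\zeta$.

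It remains to prove that $\mathcal{T}_{\psi}$ has a unique fixed point in $\Omega_{\zeta}$, and this is where the main difficulty lies. A direct Lipschitz estimate only gives
$$\|\mathcal{T}_{\psi}(\varphi_{1})-\mathcal{T}_{\psi}(\varphi_{2})\|_{\Lambda}\le\left(\frac{L_{1}T^{\alpha}}{\Gamma(\alpha+1)}+\frac{|\lambda|\,L_{2}\,\Gamma(\beta)\,T^{\alpha+\beta}}{\Gamma(\alpha+\beta+1)}\right)\|\varphi_{1}-\varphi_{2}\|_{\Lambda},$$
whose constant need not be less than $1$ when $T$ is large, so $\mathcal{T}_{\psi}$ itself may fail to be a contraction. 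I would overcome this by iterating: using the semigroup property of the Riemann--Liouville integral together with induction on $n$, one establishes a bound of the form
$$\bigl|\mathcal{T}_{\psi}^{\,n}(\varphi_{1})(t)-\mathcal{T}_{\psi}^{\,n}(\varphi_{2})(t)\bigr|\le\frac{(c\,t^{\alpha})^{n}}{\Gamma(n\alpha+1)}\,\|\varphi_{1}-\varphi_{2}\|_{\Lambda},$$
with $c$ a fixed constant depending only on $L_{1},L_{2},|\lambda|,T$ (the $g$-term contributes the extra factor $t^{\beta}$ at the first step, which only helps). Since $\sum_{n\ge 0}(cT^{\alpha})^{n}/\Gamma(n\alpha+1)$ converges (it is a Mittag--Leffler value), the coefficient $(cT^{\alpha})^{n}/\Gamma(n\alpha+1)\to 0$, so some iterate $\mathcal{T}_{\psi}^{\,n}$ is a strict contraction on $\Omega_{\zeta}$. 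By Weissinger's generalized Banach fixed-point theorem, $\mathcal{T}_{\psi}$ has a unique fixed point $y\in\Omega_{\zeta}$, which by the equivalence with \eqref{r3} is the unique continuous solution of \eqref{eq1} on $\Lambda$. (An equivalent route is to equip $C([0,T])$ with a Bielecki-type weighted norm calibrated to the Mittag--Leffler function, turning $\mathcal{T}_{\psi}$ directly into a contraction; the iteration argument above is the most transparent.)
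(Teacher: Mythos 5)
Your proposal is correct, but the route you take for the crucial contraction step is genuinely different from the paper's. The paper keeps the operator $\mathcal{T}_{\psi}$ itself and changes the norm: it equips $C(\Lambda)$ with the Bielecki-type weighted norm $\|\varphi\|_{\delta}=\|\varphi e^{-\delta t}\|_{\Lambda}$, chooses $\delta$ large enough that $\frac{L_{1}}{\delta^{\alpha}}+\frac{L_{2}\lambda\Gamma(\beta)}{\delta^{\alpha+\beta}}<1$ (its condition \eqref{defdelta}), and shows via the incomplete-Gamma estimate $\int_{0}^{t}(t-s)^{\alpha-1}e^{\delta s}ds\le \Gamma(\alpha)\delta^{-\alpha}e^{\delta t}$ that $\mathcal{T}_{\psi}$ is a strict contraction in one step, so the classical Banach principle applies directly --- this is exactly the ``equivalent route'' you mention parenthetically and set aside. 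You instead keep the sup norm and iterate, proving $|\mathcal{T}_{\psi}^{\,n}\varphi_{1}(t)-\mathcal{T}_{\psi}^{\,n}\varphi_{2}(t)|\le \frac{(ct^{\alpha})^{n}}{\Gamma(n\alpha+1)}\|\varphi_{1}-\varphi_{2}\|_{\Lambda}$ and invoking Weissinger; this is the classical Mittag--Leffler argument (as in Diethelm's book) and is equally rigorous, at the cost of an induction and of one small step you leave implicit: to fold the $g$-term into the $I^{\alpha}$ induction you need the observation $(t-s)^{\alpha+\beta-1}\le T^{\beta}(t-s)^{\alpha-1}$, i.e.\ $I^{\alpha+\beta}h\le \frac{T^{\beta}\Gamma(\alpha)}{\Gamma(\alpha+\beta)}I^{\alpha}h$ for $h\ge 0$, which justifies your remark that the extra factor ``only helps.'' Each approach buys something: the paper's weighted norm avoids induction and yields an explicit one-step contraction constant, while your iteration needs no auxiliary norm and makes the $T$-independence of solvability transparent. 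A further difference in your favor: you actually carry out the self-mapping computation $\|\mathcal{T}_{\psi}(\varphi)-\psi\|_{\Lambda}\le\zeta$, recovering the constant \eqref{defgamma} exactly, whereas the paper dismisses this as ``straightforward''; note only that, as in the paper, the coefficient of the $g$-term should really carry $|\lambda|$ rather than $\lambda$ unless $\lambda\ge 0$ is assumed.
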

\begin{proof}
Suppose that $\displaystyle y \in \Omega_{\zeta}.$ It is straightforward prove that $\displaystyle  \mathcal{T}_{\psi}\left( y \right)\in \Omega_{\zeta}$.

Let $\delta>0$ be a constant such that
\begin{equation}\label{defdelta}
\max \left(\frac{L_{1}}{\delta^{\alpha}}+\frac{L_{2} \lambda \Gamma(\beta)}{\delta^{\alpha+\beta}}\right)<1 .
\end{equation}

We introduce a new norm $\|\cdot\|_{\delta}$ over the space $C(\Lambda ; \Omega_{\zeta})$ as
$$
\|\varphi\|_{\delta}=\left\|\displaystyle \frac{\varphi}{\exp (\delta t)}\right\|_{\Lambda}.
$$

Using standard arguments, it can be readily inferred that $\Omega_{\zeta}$ is a closed subset of the Banach space of continuous functions
on $\Lambda$, associated with the norm $\|\cdot\|_{\delta}$.

 Let $\varphi, \hat{\varphi} \in \Omega_{\zeta}$. Regarding the Lipschitz assumption on $f$ and $g$, it follows
 \begin{eqnarray}\nonumber
 &&\frac{\left|\mathcal{T}_{\psi} \varphi(t)-\mathcal{T}_{\psi} \hat{\varphi}(t)\right|}{\exp (\delta t)} \leq \frac{L_{1}}{\Gamma(\alpha) \exp (\delta t)} \int_{0}^{t}(t-s)^{\alpha-1} \exp (\delta s) \frac{| \varphi(s)-\hat{\varphi}(s)|}{\exp (\delta s)} d s \\
 \nonumber
 &+& \frac{L_{2} \lambda}{\Gamma(\alpha) \exp (\delta t)} \int_{0}^{t}(t-s)^{\alpha-1}\left(\int_{0}^{s}(s-\tau)^{\beta-1} \exp (\delta \tau) \frac{|\varphi(\tau)-\hat{\varphi}(\tau)|}{\exp (\delta \tau)} d \tau\right) d s \\
\nonumber &\leq& \frac{L_{1}}{\Gamma(\alpha) \exp (\delta t)}\| \varphi-\hat{\varphi}\|_{\delta}
 \int_{0}^{t}(t-s)^{\alpha-1} \exp (\delta s) d s \\
\nonumber  &+&\frac{L_{2} \lambda}{\Gamma(\alpha) \exp (\delta t)} \|\varphi-\hat{\varphi}\|_{\delta} \int_{0}^{t}(t-s)^{\alpha-1}\left(\int_{0}^{s}(s-\tau)^{\beta-1} \exp (\delta \tau) d \tau\right) d s \\
\nonumber  &\leq& \frac{L_{1}}{\Gamma(\alpha) \delta^{\alpha}}\|\varphi-\hat{\varphi}\|_{\delta} \int_{0}^{\delta t} u^{\alpha-1} \exp (-u) d u \\
\nonumber  &+& \frac{L_{2} \lambda}{\Gamma(\alpha) \exp (\delta t) \delta^{\beta}}\|\varphi-\hat{\varphi}\|_{\delta} \int_{0}^{t}(t-s)^{\alpha-1} \exp (\delta s)\left(\int_{0}^{\delta s} v^{\beta-1} \exp (-v) d v\right) d s .
\end{eqnarray}
Due to the definition of Gamma function, we have
\begin{eqnarray}\nonumber
\frac{\left|\mathcal{T}_{\psi} \varphi(t)-\mathcal{T}_{\psi} \hat{\varphi}(t)\right|}{\exp (\delta t)} &\leq& \frac{L_{1}}{\delta^{\alpha}}\|\varphi-\hat{\varphi}\|_{\delta}+ \frac{L_{2} \lambda \Gamma(\beta)}{\exp(\delta t)\delta^{\beta} \Gamma(\alpha)} \int_0^t(t-s)^{\alpha-1}\exp(\delta s)ds\\
\nonumber &\leq &
\frac{L_{1}}{\delta^{\alpha}}\|\varphi-\hat{\varphi}\|_{\delta}+ \frac{L_{2} \lambda \Gamma(\beta)}{\delta^{\alpha +\beta} } \|\varphi-\hat{\varphi}\|_{\delta}
\\
\nonumber &=&\left(\frac{L_{1}}{\delta^{\alpha}}+\frac{L_{2} \lambda \Gamma(\beta)}{\delta^{\alpha +\beta} }\right)\|\varphi-\hat{\varphi}\|_{\delta}.\end{eqnarray}

Then from the above inequality and \eqref{defdelta}, it follows
\begin{equation}\nonumber
\left\|\mathcal{T}_{\psi} \left(\varphi\right)-\mathcal{T}_{\psi} \left(\hat{\varphi}\right)\right\|_{\delta}\leq
\left(\frac{L_{1}}{\delta^{\alpha}}+\frac{L_{2} \lambda \Gamma(\beta)}{\delta^{\alpha +\beta} }\right)\|\varphi-\hat{\varphi}\|_{\delta}< \|\varphi-\hat{\varphi}\|_{\delta}.\end{equation}

Therefore  the operator $\mathcal{T}_{\psi}$ is a contraction on $\Omega_{\zeta}$. Finally, by the Banach fixed
point principle, the proof of the theorem is
complete.
\end{proof}
We present the following theorem to dig into the asymptotic behavior of the solution of \eqref{eq1} through its series representation near the origin.
‎\begin{theorem}[Smoothness]\label{th202} Let the given continues functions $f(t,y(t))$ and $g(t,s,y(s))$ can be written as‎
‎\begin{eqnarray*}‎
f(t,y(t))&=&\bar{f}(t^{1/b},y(t))‎, ‎\\‎
‎g(t,s,y(s))&=&\bar{g}(t^{1/b},s^{1/b},y(s)),
‎\end{eqnarray*}‎
‎where $\bar{f}$ and $\bar{g}$ are analytic functions in the neighborhood of $(0,y_{0}^{(0)})$ and $(0,0,y_{0}^{(0)})$‎, ‎respectively‎. ‎Then the solution of \eqref{eq1} has the following series form in the neighborhood of the initial point‎
‎\begin{equation}\label{eq1500}‎
‎y(t)=\psi(t)+\sum\limits_{\mu=\alpha b}^{\infty}{\bar{y}_{\mu}~t^{\frac{\mu}{b}}},
‎\end{equation}‎
‎where $\bar{y}_{\mu}$ are known coefficients‎, and $b$ signifies the least common multiple of $b_{i}$‎, ‎$i=1,2$.
‎\end{theorem}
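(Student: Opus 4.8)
The plan is to start from the Volterra reformulation \eqref{r3}, i.e. $y=\mathcal{T}_{\psi}(y)$, then: (i) substitute the ansatz \eqref{eq1500}, expand the nonlinear data using the analyticity hypotheses, and apply the elementary action of $I^{\alpha}$ and of the weakly singular integral on monomials to get an explicit triangular recurrence for the coefficients $\bar{y}_{\mu}$; (ii) prove that the resulting series has positive radius of convergence by the method of majorants; (iii) invoke the uniqueness in Theorem~\ref{thm3} to identify the constructed series with $y$ near the origin. Throughout I set $\tau=t^{1/b}$; since $b_{1}\mid b$ and $b_{2}\mid b$, the numbers $\alpha b$ and $\beta b$ are positive integers, so $\psi(t)$, $t^{\alpha}$ and $t^{\beta}$ are integer powers of $\tau$, and I write the ansatz as $Y(\tau):=y(\tau^{b})=\Psi(\tau)+\sum_{\mu\ge\alpha b}\bar{y}_{\mu}\,\tau^{\mu}$, with $\Psi(\tau):=\psi(\tau^{b})$ a polynomial and $Y(0)=y_{0}^{(0)}$.

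For the formal part I would first expand the composed data. Since $\bar{f}(\tau,Y(\tau))$ is analytic in $\tau$ at $0$ (because $Y(0)=y_{0}^{(0)}$), write $f(t,y(t))=\bar{f}(\tau,Y(\tau))=\sum_{n\ge0}F_{n}\,t^{n/b}$, where $F_{n}$ is a polynomial in the Taylor coefficients of $\bar{f}$ and in $\bar{y}_{\nu}$ with $\nu\le n$; similarly $g(t,s,y(s))=\bar{g}(t^{1/b},s^{1/b},Y(s^{1/b}))=\sum_{m,n\ge0}G_{mn}\,t^{m/b}s^{n/b}$, with $G_{mn}$ a polynomial in the Taylor coefficients of $\bar{g}$ and in $\bar{y}_{\nu}$, $\nu\le n$. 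Then, using $I^{\alpha}t^{\gamma}=\frac{\Gamma(\gamma+1)}{\Gamma(\gamma+\alpha+1)}\,t^{\gamma+\alpha}$ and $\int_{0}^{t}(t-s)^{\beta-1}s^{\gamma}\,ds=\frac{\Gamma(\beta)\Gamma(\gamma+1)}{\Gamma(\beta+\gamma+1)}\,t^{\beta+\gamma}$, I would insert these expansions into \eqref{r3} and act term by term: the right-hand side minus $\psi$ is a series in $\tau$ whose lowest power is $\tau^{\alpha b}$ (from $I^{\alpha}F_{0}$), the $g$-contribution entering only from $\tau^{(\alpha+\beta)b}$ on. Matching the coefficient of $\tau^{\mu}$ yields, for each integer $\mu\ge\alpha b$,
\begin{equation*}
\bar{y}_{\mu}=\frac{\Gamma\big(\tfrac{\mu}{b}-\alpha+1\big)}{\Gamma\big(\tfrac{\mu}{b}+1\big)}\,F_{\mu-\alpha b}+\lambda\!\!\sum_{m+n=\mu-(\alpha+\beta)b}\!\!\frac{\Gamma(\beta)\,\Gamma\big(\tfrac{n}{b}+1\big)}{\Gamma\big(\tfrac{m+n}{b}+\alpha+\beta+1\big)}\,G_{mn}.
\end{equation*}
Because $\alpha b\ge1$, the right-hand side involves only $\bar{y}_{\nu}$ with $\nu<\mu$, so every $\bar{y}_{\mu}$ is uniquely and constructively determined, which produces \eqref{eq1500} as a formal series.

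The main obstacle is to show that $\sum_{\mu\ge\alpha b}\bar{y}_{\mu}\tau^{\mu}$ converges for small $\tau$, and I would settle it by majorants. By Cauchy's estimates, near the relevant points $\bar{f}$ and $\bar{g}$ are dominated coefficient-wise by $\frac{M}{(1-\tau/r)(1-(y-y_{0}^{(0)})/r)}$ and $\frac{M}{(1-\tau/r)(1-\sigma/r)(1-(y-y_{0}^{(0)})/r)}$ for suitable $M,r>0$, while the Gamma quotients in the recurrence are uniformly bounded, say by $C_{\Gamma}$, since $\alpha,\beta>0$. Re-running the recurrence with every coefficient, every $y_{0}^{(k)}$ and every Gamma quotient replaced by its absolute value / by $C_{\Gamma}$ defines nonnegative numbers $\bar{y}_{\mu}^{*}\ge|\bar{y}_{\mu}|$ whose generating function $Z(\tau)=\sum_{\mu\ge\alpha b}\bar{y}_{\mu}^{*}\tau^{\mu}$ satisfies a functional equation $Z=\mathcal{G}(\tau,Z)$, where $\mathcal{G}$ is analytic near $(0,0)$, with $\mathcal{G}(0,0)=0$ and $\partial_{Z}\mathcal{G}(0,0)=0$ (owing to the explicit factors $\tau^{\alpha b}$, $\tau^{(\alpha+\beta)b}$ and $\alpha b\ge1$). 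The analytic implicit function theorem then yields $Z$ analytic at $0$, so $\sum\bar{y}_{\mu}^{*}\tau^{\mu}$ — and a fortiori $\sum\bar{y}_{\mu}t^{\mu/b}$ — converges for $t$ in a neighbourhood of the origin. This absolute convergence legitimises the term-by-term operations above, so the constructed function solves \eqref{r3} on some interval $[0,\rho]$, and by the uniqueness established in Theorem~\ref{thm3} (the contraction argument applies verbatim on $[0,\rho]$) it coincides there with $y$, which is exactly \eqref{eq1500}. The steps needing care are: the bookkeeping ensuring $F_{n}$ and $G_{mn}$ contain no $\bar{y}_{\nu}$ with $\nu\ge\mu$ (triangularity of the recurrence); the verification that $\mathcal{G}$ is a genuine majorant, i.e. that sums, products, compositions and the two integral operators all preserve coefficient-wise domination; and the routine justification of exchanging the integrals with the infinite series.
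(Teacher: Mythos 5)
Your proposal is correct, and its formal half coincides with the paper's: both substitute the fractional power-series ansatz into the Volterra reformulation \eqref{r3}, expand $\bar f$ and $\bar g$ using analyticity, apply $I^{\alpha}t^{\gamma}=\frac{\Gamma(\gamma+1)}{\Gamma(\gamma+\alpha+1)}t^{\gamma+\alpha}$ together with the Beta-function identity for the weakly singular integral, and arrive at the same triangular recurrence for $\bar y_{\mu}$ (your displayed coefficient of $G_{mn}$ is missing the factor $\Gamma(\frac{m+n}{b}+\beta+1)/\Gamma(\frac{n}{b}+\beta+1)$ that appears in the paper's $\varrho_{2}$, but this quotient is bounded and the slip does not affect the structure of the argument). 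Where you genuinely diverge is the convergence step. The paper also passes to a majorant problem with all data replaced by absolute values, but then argues ``from below'': it shows by induction that the partial sums $S_{L}$ of the majorant series satisfy $\lvert S_{L}(t)\rvert\le 3D^{1}$ on an explicitly constructed interval $[0,\omega]$, and concludes convergence of the majorant from monotonicity and boundedness, invoking Lindel\"of's theorem to transfer convergence back to $y$. You instead close the majorant recurrence into a scalar functional equation $Z=\mathcal G(\tau,Z)$ with $\mathcal G(0,0)=0$ and $\partial_{Z}\mathcal G(0,0)=0$ (thanks to the prefactors $\tau^{\alpha b}$, $\tau^{(\alpha+\beta)b}$ with $\alpha b\ge 1$) and apply the analytic implicit function theorem. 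Both routes are sound: the paper's is more elementary and produces an explicit $\omega$ in terms of $D^{1},D^{2},D^{3}$, while yours delivers analyticity of the majorant in the variable $\tau=t^{1/b}$ directly and reduces the ``genuine majorant'' verification to the standard closure of coefficientwise domination under sums, products, composition and the two integral operators. Your closing step, identifying the constructed convergent series with the solution via the uniqueness of Theorem~\ref{thm3} restricted to $[0,\rho]$, is in fact a cleaner way to finish than the paper's final appeal to Lindel\"of's theorem.
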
‎
‎\begin{proof}‎
Regard the series expansion of $y(t)$ as
‎\begin{equation}\label{eq3}‎
‎y(t)=\sum\limits_{\mu=0}^{\infty}{\bar{y}_{\mu}~t^{\frac{\mu}{b}}}‎.
‎\end{equation}‎
‎The unknown coefficients $\bar{y}_{\mu}$ are obtained in order that the series \eqref{eq3} converges and solves \eqref{eq1}‎. ‎We use the series expansions of $\bar{f}$ and $\bar{g}$ around $(0,y_{0}^{(0)})$ and $(0,0,y_{0}^{(0)})$‎, ‎respectively‎, ‎viz‎.
‎\begin{eqnarray}\label{e54-11}‎
‎\nonumber f(t,y(t))&=&\bar{f}(t^{1/b},y(t))=\sum\limits_{\substack{\mu=0 \\ \theta=0}}^{\infty}{f_{\mu,\theta}~t^{\frac{\mu}{b}}\bigg(y(t)-y_{0}^{(0)}\bigg)^{\theta}}‎, ‎\\‎
‎g(t,s,y(s))&=&\bar{g}(t^{1/b},s^{1/b},y(s))=\sum\limits_{\substack{\mu,\nu=0 \\ \theta=0}}^{\infty}{g_{\mu,\nu,\theta}~t^{\frac{\mu}{b}}s^{\frac{\nu}{b}}\bigg(y(s)-y_{0}^{(0)}\bigg)^{\theta}}.
‎\end{eqnarray}‎

‎By rearranging‎, ‎it can be concluded that‎
‎\begin{equation}\label{e235}‎
‎\bigg(y(t)-y_{0}^{(0)}\bigg)^{\theta}=\bigg(\sum\limits_{\mu=1}^{\infty}{\bar{y}_{\mu} t^{\frac{\mu}{b}}}\bigg)^{\theta}=‎
‎\sum\limits_{\sigma=0}^{\infty}{C_{\sigma}^{\theta} t^{\frac{\sigma}{b}}}‎,
‎\end{equation}‎
‎where‎
‎\begin{equation*}
‎{C}_{\sigma}^{\theta}=\begin{cases}‎
‎1,\hspace{+4.5cm} \theta=0,~\sigma=0,\\‎
‎0,\hspace{+4.5cm} \theta=0,~\sigma \geq 1,\\‎
‎\sum\limits_{\mu_{1}+\ldots+\mu_{\theta}=\sigma}\bar{y}_{\mu_{1}}‎
‎\ldots \bar{y}_{\mu_{\theta}}, \quad \quad \quad \quad ~ \quad \theta \ne 0,~\sigma \geq 1‎.
‎\end{cases}‎
‎\end{equation*}‎

‎Therefore‎, in view of the equivalent equation \eqref{r3} and ‎substituting the relation \eqref{e235} into \eqref{e54-11}‎, ‎we find‎
‎\begin{multline*}
\sum\limits_{\mu=0}^{\infty}{\bar{y}_{\mu}~t^{\frac{\mu}{b}}}‎=\psi(t)+I^{\alpha}\Bigg(\sum\limits_{\substack{\mu=0 \\ \theta=0}}^{\infty}{f_{\mu,\theta}~t^{\frac{\mu}{b}}\Big(‎\sum\limits_{\sigma=0}^{\infty}{C_{\sigma}^{\theta} t^{\frac{\sigma}{b}}}\Big)‎}\Bigg)\\
+\lambda I^{\alpha}\Bigg(\int_{0}^{t}(t-s)^{{\beta}-1}\sum\limits_{\substack{\mu,\nu=0 \\ \theta=0}}^{\infty}{g_{\mu,\nu,\theta}~t^{\frac{\mu}{b}}s^{\frac{\nu}{b}}\Big(\sum\limits_{\sigma=0}^{\infty}{C_{\sigma}^{\theta} s^{\frac{\sigma}{b}}}\Big)ds}\Bigg).
\end{multline*}

‎Meanwhile‎, ‎by considering uniform convergence, ‎the coefficients $\bar{y}_{\mu}$ satisfy the following equality‎
‎‎\begin{equation}\label{eq148}
\sum\limits_{\mu=0}^{\infty}{\bar{y}_{\mu}~t^{\frac{\mu}{b}}}‎=\psi(t)+\varrho_{1}\Bigg(\sum\limits_{\substack{\mu=0 \\ \theta,\sigma=0}}^{\infty}{f_{\mu,\theta}~C_{\sigma}^{\theta} ~t^{\alpha+\frac{\mu+\sigma}{b}}}\Bigg)‎+\lambda \varrho_{2}\Bigg(\sum\limits_{\substack{\mu,\nu=0 \\ \theta,\sigma=0}}^{\infty}{g_{\mu,\nu,\theta}~C_{\sigma}^{\theta} ~t^{\alpha+\beta+\frac{\mu+\nu+\sigma}{b}}}\Bigg),
\end{equation}
‎in which
$$
\varrho_{1}=\frac{\Gamma(\frac{\mu+\sigma}{b}+1)}{\Gamma(\frac{\mu+\sigma}{b}+\alpha+1)},\quad \varrho_{2}=\frac{\Gamma(\beta)\Gamma(\frac{\nu+\sigma}{b}+1)\Gamma(\frac{\mu+\nu+\sigma}{b}+\beta+1)}{\Gamma(\frac{\nu+\sigma}{b}+\beta+1)\Gamma(\frac{\mu+\nu+\sigma}{b}+\alpha+\beta+1)}.
$$
Setting $\mu=\mu-\sigma -\alpha b$ and $\mu=\mu-\nu-\sigma-\alpha b-\beta b$ into the first and second series on the right-hand side of \eqref{eq148} respectively deduces
‎\begin{multline}\label{eq6}
\sum\limits_{\mu=0}^{\infty}{\bar{y}_{\mu}~t^{\frac{\mu}{b}}}‎=\psi(t)+\bar{\varrho}_{1}\Bigg(\sum\limits_{\substack{\mu=\sigma+\alpha b \\ \theta,\sigma=0}}^{\infty}{f_{\mu-\sigma-\alpha b,\theta}~C_{\sigma}^{\theta} ~t^{\frac{\mu}{b}}}\Bigg)‎\\
+\lambda \bar{\varrho}_{2}\Bigg(\sum\limits_{\substack{\mu=\nu+\sigma+\alpha b+\beta b \\ \nu,\theta,\sigma=0}}^{\infty}{g_{\mu-\nu-\sigma-\alpha b-\beta b,\nu,\theta}~C_{\sigma}^{\theta} ~t^{\frac{\mu}{b}}}\Bigg),
\end{multline}
where
$$
\bar{\varrho}_{1}=\frac{\Gamma(\frac{\mu}{b}-\alpha+1)}{\Gamma(\frac{\mu}{b}+1)},\quad \bar{\varrho}_{2}=\frac{\Gamma(\beta)\Gamma(\frac{\nu+\sigma}{b}+1)\Gamma(\frac{\mu}{b}-\alpha+1)}{\Gamma(\frac{\nu+\sigma}{b}+\beta+1)\Gamma(\frac{\mu}{b}+1)}.
$$
We now arrive at $\bar{y}_{\mu}$ through making a comparison between the coefficients of $t^{\frac{\mu}{b} }$ on both sides of \eqref{eq6}. ‎Evidently‎, ‎for $\mu < \alpha b$‎, ‎we can write‎
\begin{equation*}‎
\bar{y}_{\mu}=\left\{\begin{array}{l}‎
\frac{y_{0}^{(\frac{\mu}{b})}}{(\frac{\mu}{b})!},\quad \mu=0,b‎, ..., ‎(\lceil \alpha \rceil-1) b,\\‎
‎0,\hspace{2 cm}\text{else}‎,
\end{array}\right.
\end{equation*}‎

‎and for $\mu \geq \alpha b$‎, ‎the following recursive relation is derived‎
‎\[‎
\bar{y}_{\mu}‎=\sum\limits_{\substack{ \nu,\theta,\sigma=0}}^{\infty}{C_{\sigma}^{\theta}\Bigg(\bar{\varrho}_{1}~f_{\mu-\sigma-\alpha b,\theta}+\lambda \bar{\varrho}_{2}~g_{\mu-\nu-\sigma-\alpha b-\beta b,\nu,\theta}\Bigg)},
‎\]‎
‎such that the coefficients $f_{\mu-\sigma-\alpha b,\theta}$ and $g_{\mu-\nu-\sigma-\alpha b-\beta b,\nu,\theta}$, equipped with negative indices, are deemed to be zero‎, ‎and in the case of $\mu-\sigma-\alpha b \geq 0$ and $\mu-\nu-\sigma-\alpha b-\beta b \geq 0$‎, ‎we observe that‎
‎\begin{eqnarray*}‎
‎\mu &\geq &\sigma +\alpha b >\sigma=\mu_{1}+\ldots+\mu_{\theta}\geq \mu_{i}‎, \\
‎\mu &\geq& \nu+\sigma +\alpha b+\beta b >\sigma=\mu_{1}+\ldots+\mu_{\theta}\geq \mu_{i}‎, ‎\quad i=1,2,\ldots,\theta.
‎\end{eqnarray*}‎‎
‎In other words‎, ‎$\bar{y}_{\mu}$ are obtained recursively‎, and thereby the series expansion \eqref{eq1500} is a unique formal solution of \eqref{eq1}‎.

We are now required to verify that such power series is uniformly and absolutely convergent in the neighborhood of zero‎. In this regard, we apply Lindelof's majorant approach \cite{39}‎. ‎Deem the weakly singular Volterra integral equation
‎\[‎
Y(t)=\bar{\psi}(t)+I^{\alpha}F(t,y(t))+\lambda I^{\alpha}\Big(\int_{0}^{t}(t-s)^{{\beta}-1}G(t,s,Y(s))ds\Big),
‎\]‎
‎where‎ $\bar{\psi}(t)=\sum\limits_{k=0}^{\lceil \alpha \rceil -1}\frac{t^{k}}{k!} \lvert y_{0}^{(k)}\rvert$
‎\begin{eqnarray*}‎
‎\nonumber F(t,y(t))&=&\bar{F}(t^{1/b},Y(t))=\sum\limits_{\substack{ \mu=0 \\ \theta=0}}^{\infty}{\lvert f_{\mu,\theta}\rvert~t^{\frac{\mu}{b}}\bigg(Y(t)-\lvert y_{0}^{(0)}\rvert\bigg)^{\theta}}‎, ‎\\‎
‎G(t,s,Y(s))&=&\bar{G}(t^{1/b},s^{1/b},Y(s))=\sum\limits_{\substack{\mu, \nu=0 \\ \theta=0}}^{\infty}{\lvert g_{\mu,\nu,\theta} \rvert~t^{\frac{\mu}{b}}‎s^{\frac{\nu}{b}}‎
\bigg(Y(t)-\lvert y_{0}^{(0)}\rvert\bigg)^{\theta}}‎.
‎\end{eqnarray*}‎

‎It is clear that all coefficients of $Y$ are positive, and it is a majorant for $y$‎. ‎The series expansion $Y$ may be calculated in precisely the same manner as above‎. Currently‎, for some $\omega>0$ given in the sequel, we prove that the series $Y(t)$ converges absolutely over $[0,\omega]$. For this purpose‎, the key is to justify that the finite partial sum of the formal solution $Y(t)$ i.e.
‎\[‎
‎S_{L+1}(t)=\bar{\psi}(t)+\sum\limits_{\mu=\alpha b}^{L+1}{\bar{Y}_{\mu}~t^{\frac{\mu}{b}}}‎,
‎\]‎
can be uniformly bounded on $[0,\omega]$‎‎. ‎Clearly‎, ‎we have the inequality below
‎\begin{equation}‎\label{eqs254}
‎S_{L+1}(t) \le \bar{\psi}(t)+I^{\alpha}F(t,S_{L}(t))+\lambda I^{\alpha}\Big(\int_{0}^{t}(t-s)^{{\beta}-1}G(t,s,S_{L}(s))ds\Big),
‎\end{equation}‎
in accordance with the recursive evaluation of the coefficients‎. Albeit‎, ‎all the coefficients $\bar{Y}_{\mu}$ with $\frac{\mu}{b}\le \frac{(L+1)}{b}$ are removed from both sides by expanding the right-hand side of \eqref{eqs254}, there still exists some extra terms of higher-order in the right-hand side‎. Considering‎
‎\begin{eqnarray*}‎
‎D^{1}&=&\sum\limits_{k=0}^{\lceil \alpha \rceil -1}\frac{T^{k}}{k!}\lvert y_{0}^{(k)}\rvert,\\‎
‎D^{2}&=&\max_{(t,z)\in \Lambda \times [0‎
‎,3D^{1}]}{\frac{\big \lvert F(t,z)\big \rvert}{\Gamma(\alpha+1)}},\\
D^{3}&=&\max_{(t,s,z)\in \Lambda \times \Lambda \times [0‎
‎,3D^{1}]}{\frac{\Gamma(\beta)\big \lvert G(t,s,z)\big \rvert}{\Gamma(\alpha+\beta+1)}},
‎\end{eqnarray*}‎
‎we define‎
‎$$\omega=\min\bigg\{T,\bigg[\frac{D^{1}}{D^{2}}\bigg]^{\frac{1}{\alpha}},\bigg[\frac{D^{1}}{D^{3}}\bigg]^{\frac{1}{\alpha+\beta}}\bigg\}.$$‎
‎At this step, the aim is to show that $\lvert S_{L}(t) \rvert \le 3 D^{1}$, $t \in [0,\omega]$ by means of mathematical induction on $L$‎‎. The result is valid‎ for $L=0$‎ because
‎\[‎
‎S_{0}(t)=\lvert y_{0}^{(0)} \rvert \leq D^{1}.
‎\]‎

‎We regard that it is valid for $L$‎, ‎and proceed to $L+1$‎ as follows
‎\begin{eqnarray*}‎
‎\lvert S_{L+1}(t) \rvert&=&S_{L+1}(t) \\
&\le& \bar{\psi}(t)+I^{\alpha}F(t,S_{L}(t))+\lambda I^{\alpha}\Big(\int_{0}^{t}(t-s)^{{\beta}-1}G(t,s,S_{L}(s))ds\Big) \\‎
  ‎ &\le& \sum\limits_{k=0}^{\lceil \alpha \rceil -1}\frac{\omega^{k}}{k!}\lvert y_{0}^{(k)}\rvert + \max\limits_{s\in [0,t]}\lvert F(s,S_{L}(s))\rvert \frac{t^{\alpha}}{\Gamma(\alpha+1)}\\
  &+&\max\limits_{s\in [0,t]}\lvert G(t,s,S_{L}(s))\rvert \frac{\Gamma(\beta)~t^{\alpha+\beta}}{\Gamma(\alpha+\beta+1)}\\‎
  ‎ &\le& D^{1}+\max_{(s,z)\in [0,\omega] \times [0‎
,‎3D^{1}]}\lvert F(s,z)\rvert \frac{\omega^{\alpha}}{\Gamma(\alpha+1)}\\
&+&\max\limits_{(t,s,z)\in [0,\omega] \times [0,\omega] \times [0‎
,‎3D^{1}]}\lvert G(t,s,z)\rvert \frac{\Gamma(\beta)~\omega^{\alpha+\beta}}{\Gamma(\alpha+\beta+1)}\\‎
   &\le&  D^{1}+\omega^{\alpha} D^{2}+\omega^{\alpha+\beta} D^{3}\le 3 D^{1}‎.\hspace{+7.8cm}
\end{eqnarray*}‎
This establishes that $S_{L+1}(t)$ is uniformly bounded on $[0‎, \omega]$‎. Since all the coefficients $S_{L+1}(t)$ are positive, it is monotone as well‎. As a result‎, in according to the power series structure of $Y(t)$‎, the majorant $Y(t)$ converges absolutely over $[0,\omega]$, and it is uniformly convergent on the compact subsets of $[0,\omega)$‎. Ultimately‎, Lindelof's majorant theorem concludes that‎ the series expansion $y(t)$‎ inherits the same features. That is why the above exchange of integration and series was legal‎.
‎\end{proof}‎
Theorem \ref{th202} tells us that $\partial_t^{\lceil \alpha \rceil}  y(t)$ may not be continuous at the initial point. In consequence, the accuracy of the classical spectral methods might be threatened by this difficulty. That is why constructing an exponentially accurate or high-order spectral method is a kind of challenging task. Next section includes such approach to approximate the solution of \eqref{eq1} which satisfies the assumptions of theorems \ref{thm3} and \ref{th202}.
\section{Numerical approach}\label{sec2}
In this section, the goal is firstly to direct some critical properties of Jacobi polynomials, GJPs, and FGJFs, and present an effective strategy based on an advanced operational Petrov-Galerkin method to approximate the solution of \eqref{eq1}.
\subsection{Jacobi polynomials}
The orthogonal relation of classical Jacobi polynomials $J_n^{(\rho,\eta)}(s)$, $\rho, \eta >-1$ is as follows \cite{29}
\begin{equation*}
‎\int_{I}J_{m}^{(\rho,\eta)}(s)
J_{n}^{(\rho,\eta)}(s)w^{(\rho,\eta)}(s)ds=\lambda_{n}^{(\rho,\eta)}\delta_{mn}‎, ‎\quad  m,n\geq 0‎,
\end{equation*}
in which‎ $w^{(\rho,\eta)}(s)=(1-s)^\rho(1+s)^\eta$,
$$\lambda_{n}^{(\rho,\eta)}=\|J_{n}^{(\rho,\eta)}\|_{w^{(\rho,\eta)}}^{2}=\dfrac{2^{\rho+\eta+1}\Gamma(n+\rho+1)\Gamma(n+\eta+1)}
{(2n+\rho+\eta+1)n!\Gamma(n+\rho+\eta+1)},‎$$
‎and $\delta_{mn}$
 directs the Kronecker delta function. Jacobi polynomials satisfy the Rodrigues' formula below
\[
w^{(\rho,\eta)}(s)J_n^{(\rho,\eta)}(s)=\frac{(-1)^n}{2^n n!}\frac{d^n}{ds^n}\big\{(1-s)^{n+\rho}(1+s)^{n+\eta}\big\}.
\]
\subsection{Generalized Jacobi polynomials}
Let us first define $\hat{\rho},\hat{\eta}$ and $\tilde{\rho},\tilde{\eta}$ from $\rho,\eta$ as follows
\begin{equation*}
\hat{\rho}=
\begin{cases}
  -\rho, & \rho \leq -1, \\
  0, \quad & \rho > -1,
\end{cases}
\quad \quad \quad \quad \tilde{\rho}=
\begin{cases}
  -\rho, & \rho \leq -1, \\
  \rho, & \rho > -1,
\end{cases}
\end{equation*}
likewise for $\hat{\eta}$ and $\tilde{\eta}$.\\
For each $\rho,\eta \in \mathbb{Z}$, the GJPs eliminating the restriction $\rho, \eta >-1$ are defined by \cite{26rv}
\begin{equation*}
\bar{J}_{n}^{(\rho,\eta)}(s)=(1-s)^{\hat{\rho}}(1+s)^{\hat{\eta}} J_{\tilde{n}}^{(\tilde{\rho},\tilde{\eta})}(s),\quad \tilde{n}=n-\kappa_{{\rho},{\eta}} \geq 0, ~~ \kappa_{{\rho},{\eta}}=\hat{\rho}+\hat{\eta}.
\end{equation*}

The GJPs satisfy the Sturm-Liouville equation
    \begin{equation*}
        \partial_s \left((1-s)^{\rho+1}(1+s)^{\eta+1} \partial_s \bar{J}_n^{(\rho,\eta)}(s)\right)+\xi_{n}^{(\rho,\eta)}(1-s)^{\rho}(1+s)^{\eta}\bar{J}_n^{(\rho,\eta)}(s)=0,
    \end{equation*}
    where
    \begin{equation*}
      \xi_{n}^{(\rho,\eta)}=\begin{cases}
      \tilde{n}(\tilde{n}+\rho+\eta+1), & (\rho,\eta) > -1, \\
      \tilde{n}(\tilde{n}-\rho+\eta+1)-\rho(\eta+1), &  \rho \leq -1, ~\eta > -1,\\
      \tilde{n}(\tilde{n}+\rho-\eta+1)-\eta(\rho+1), & \rho > -1, ~\eta \leq -1,\\
      (\tilde{n}+1)(\tilde{n}-\rho-\eta), & (\rho,\eta) \leq -1,
      \end{cases}
    \end{equation*}
    and are mutually $L_{w^{(\rho,\eta)}}^2(I)$-orthogonal, i.e.,
\begin{equation*}
‎\int_{I}\bar{J}_{m}^{(\rho,\eta)}(s)\bar{J}_{n}^{(\rho,\eta)}(s)w^{(\rho,\eta)}(s)ds=\lambda_{\tilde{n}}^{(\tilde{\rho},\tilde{\eta})}\delta_{mn}‎, ‎\quad  m,n\geq \kappa_{{\rho},{\eta}}.
‎\end{equation*}‎

The following relations indicate outstanding properties of the GJPs
\begin{eqnarray*}
&&\partial_{s}^{i} \bar{J}_{n}^{(\rho,\eta)}(1)=0, \quad \quad \quad \quad  i=0,1,\ldots,‎ \hat{\rho} -1, \quad \quad \rho \leq -1, ~\eta >-1, \\
&&\partial_{s}^{j} \bar{J}_{n}^{(\rho,\eta)}(-1)=0,~~\quad \quad  j=0,1,\ldots,‎ \hat{\eta} -1, \quad \quad ~ \rho > -1, ~\eta \leq -1,
\end{eqnarray*}
and for $(\rho,\eta) \leq -1$, we have
\begin{eqnarray*}
&&\partial_{s}^{i} \bar{J}_{n}^{(\rho,\eta)}(1)=0, \quad \quad \quad \quad  i=0,1,\ldots,‎ \hat{\rho} -1,\\
&&\partial_{s}^{j} \bar{J}_{n}^{(\rho,\eta)}(-1)=0,~~~\quad \quad  j=0,1,\ldots, \hat{\eta} -1.
\end{eqnarray*}

This feature invokes the GJPs toward appropriate basis functions for the Galerkin and Petrov-Galerkin approximations of smooth solutions of functional differential equations associated with the following boundary conditions
\begin{eqnarray*}
\partial_{s}^{i} P(1)&=&0, \quad \quad \quad \quad  i=0,1,\ldots, \hat{\rho} -1,\\
\partial_{s}^{j} P(-1)&=&0, \quad \quad \quad \quad  j=0,1,\ldots, \hat{\eta} ‎-1.
\end{eqnarray*}
\subsection{The fractional generalized Jacobi functions}
Let us begin this section with a vital question. Is it feasible to promote the GJPs, whereby a novel set of fractional orthogonal functions and related Galerkin and Petrov-Galerkin approximations are achieved? Thankfully, Faghih and Mokhtary \cite{sevom} introduced the fractional generalized Jacobi functions which have outstanding approximate properties to the functions owning singularity at boundaries.

Suppose that $\gamma \in (0,1]$ and $t \in \Lambda$, the essence of the FGJFs $\bar{J}_n^{(\rho,\eta,\gamma)}(t)$ comes from the GJPs by means of the coordinate transformation $s=2 (\frac{t}{T})^{\gamma}-1$ as \cite{sevom}
\begin{equation}\label{e6}
\bar{J}_n^{(\rho,\eta,\gamma)}(t)=\bar{J}_n^{(\rho,\eta)}\left(2 (\frac{t}{T})^{\gamma}-1\right)=\frac{2^{\hat{\rho}+\hat{\eta}}}{T^{\gamma(\hat{\rho}+\hat{\eta})}}
\bigg(T^\gamma-t^\gamma\bigg)^{\hat{\rho}} t^{\gamma \hat{\eta}} J_{\tilde{n}}^{(\tilde{\rho},\tilde{\eta},\gamma)}(t),
\end{equation}
in which $\rho, \eta \in \mathbb{Z}$, $n \ge \kappa_{\rho,\eta}$ and
\begin{equation*}
J_{\tilde{n}}^{(\tilde{\rho},\tilde{\eta},\gamma)}(t)=J_{\tilde{n}}^{(\tilde{\rho},\tilde{\eta})}\left(2 (\frac{t}{T})^{\gamma}-1\right),
\end{equation*}
indicates the fractional Jacobi function (FJF) satisfying the explicit formula \cite{35}
‎\begin{equation*}‎
‎J_{\tilde{n}}^{(\tilde{\rho},\tilde{\eta},\gamma)}(t)=\sum_{j=0}^{\tilde{n}}\frac{1}{T^{\gamma j}}\Upsilon_{j}^{(\tilde{\rho},\tilde{\eta},\tilde{n})}t^{\gamma j}=Span\{1,t^{\gamma},\ldots,t^{\tilde{n}\gamma}\}‎,
\end{equation*}‎
where
‎\begin{equation*}‎
\Upsilon_{j}^{(\tilde{\rho},\tilde{\eta},\tilde{n})}=\dfrac{(-1)^{\tilde{n}-j}\Gamma(\tilde{n}+\tilde{\eta}+1)\Gamma(\tilde{n}+\tilde{\rho}+\tilde{\eta}+j+1)}{\Gamma(\tilde{\eta}+j+1)j!\Gamma(\tilde{n}+\tilde{\rho}+\tilde{\eta}+1)(\tilde{n}-j)!}‎.
‎\end{equation*}‎
It can be inferred that the FJFs
are $L_{w^{(\tilde{\rho},\tilde{\eta},\gamma)}}^2(\Lambda)$-orthogonal along with the weight function $w^{(\tilde{\rho},\tilde{\eta},\gamma)}(t)=\frac{\gamma}{T^{\gamma(\tilde{\rho}+\tilde{\eta}+1)-1}}\bigg(T^\gamma-t^\gamma\bigg)
^{\tilde{\rho}}t^{\gamma(\tilde{\eta}+1)-1}$, i.e., we have
\begin{equation*}
‎\int_{\Lambda}J_{\tilde{m}}^{(\tilde{\rho},\tilde{\eta},\gamma)}(t)J_{\tilde{n}}^{(\tilde{\rho},\tilde{\eta},\gamma)}(t)
w^{(\tilde{\rho},\tilde{\eta},\gamma)}(t)dt=\lambda_{\tilde{n}}^{(\tilde{\rho},\tilde{\eta},\gamma)}\delta_{\tilde{m}\tilde{n}}‎‎,
‎\end{equation*}‎
‎in which‎
‎\begin{equation*}‎
\lambda_{\tilde{n}}^{(\tilde{\rho},\tilde{\eta},\gamma)}=\|J_{\tilde{n}}^{(\tilde{\rho},\tilde{\eta},\gamma)}\|_
{w^{(\tilde{\rho},\tilde{\eta},\gamma)}}^{2}=
\frac{T}{2^{\tilde{\rho}+\tilde{\eta}+1}}\lambda_{\tilde{n}}^{(\tilde{\rho},\tilde{\eta})}.
\end{equation*}‎

The FGJFs $\{\bar{J}_n^{(\rho,\eta,\gamma)}\}_{n \geq \kappa_{\rho,\eta}}$ satisfies the Sturm-Liouville problem
\[
\partial_t \left(T(T^\gamma-t^\gamma) t^{2-\gamma} w^{(\rho,\eta,\gamma)}(t)‎ \partial_t \bar{J}_n^{(\rho,\eta,\gamma)}(t)\right)
+\frac{\gamma^2}{T} \xi_{n}^{(\rho,\eta)}w^{(\rho,\eta),\gamma}(t)‎\bar{J}_n^{(\rho,\eta,\gamma)}(t)=0.
\]

The FGJFs are mutually orthogonal along with the following orthogonal relation
\begin{equation*}
‎\int_{\Lambda}\bar{J}_{m}^{(\alpha,\beta,\gamma)}(t)\bar{J}_{n}^{(\alpha,\beta,\gamma)}(t)w^{(\alpha,\beta,\gamma)}(t)dt=\frac{T}{2^{\alpha+\beta+1}}
\lambda_{\tilde{n}}^{(\tilde{\alpha},\tilde{\beta})}\delta_{mn}‎.
\end{equation*}‎

The following relations hold
\begin{eqnarray*}
     \nonumber
      \partial_{t}^{i} \bar{J}_{n}^{(\rho,\eta,\gamma)}(T)&=&0, \quad \quad \quad \quad  i=0,1,\ldots,‎ \hat{\rho} ‎-1, \quad \quad \quad \rho \leq -1, ~\eta >-1, \\
     \nonumber \partial_{t}^{j} \bar{J}_{n}^{(\rho,\eta,\gamma)}(0)&=&0, \quad \quad \quad \quad  j=0,1,\ldots,\lceil‎ \gamma\hat{\eta} ‎\rceil-1, ~~ \quad \rho > -1, ~\eta \leq -1,\\
    \end{eqnarray*}
and for $\rho,\eta \leq -1$, we have
         \begin{eqnarray*}
     \nonumber
      \partial_{t}^{i} \bar{J}_{n}^{(\rho,\eta,\gamma)}(T)&=&0, \quad \quad \quad \quad  i=0,1,\ldots, \hat{\rho} -1,\\
     \partial_{t}^{j} \bar{J}_{n}^{(\rho,\eta,\gamma)}(0)&=&0, \quad \quad \quad \quad  j=0,1,\ldots,\lceil‎ \gamma \hat{\eta} ‎\rceil-1,
    \end{eqnarray*}
which is one of the remarkable features of the FGJFs.
\subsection{Operational Petrov-Galerkin method}
This section offers a highly accurate operational Petrov-Galerkin method based on the FGJFs to approximate the solution of \eqref{eq1}. Solvability analysis of the relevant non-linear algebraic system is also provided through a sequence of matrix operations. Inserting $\rho=0,~\eta=-\alpha b$ and $\gamma=\frac{1}{b}$ in \eqref{e6}, the FGJFs $\{\bar{J}_k^{0,-\alpha b,\gamma}\}_{k \geq \alpha b}$
\begin{equation}\label{ert25}
\bar{J}_k^{(0,-\alpha b,\gamma)}(t)=\frac{2^{\alpha b}}{T^\alpha}t^\alpha J_{k-\alpha b}^{(0,\alpha b,\gamma)}(t)=\bar{J}_k^{(0,-\alpha b,\gamma)}=\text{Span}\{t^{\alpha},t^{\alpha+\gamma},\ldots,t^{k \gamma}\},
\end{equation}
satisfy the initial conditions of the equation \eqref{eq1}. Thereby they are capable of serving as basis functions to obtain the Petrov-Galerkin approximation of \eqref{eq1}.

Based on \eqref{ert25}, we arrive at $\underline{J}^{(0,-\alpha b,\gamma)}=J \underline{T}_{t}$, where
$$\underline{J}^{(0,-\alpha b,\gamma)}=[\bar{J}_{\alpha b}^{(0,-\alpha b,\gamma)}(t),\bar{J}_{\alpha b+1}^{(0,-\alpha b,\gamma)}(t)‎,
‎\ldots,\bar{J}_{N}^{(0,-\alpha b,\gamma)}(t),\ldots]^{\prime},$$
indicating the vector of FGJFs with degree $(\bar{J}_{k}^{(0,-\alpha b,\gamma)}(t))\leq k \gamma$, $\underline{T}_{t}=[t^{\alpha},t^{\alpha+\gamma},\ldots,t^{N\gamma},\ldots]^{\prime}$ ‎and $J$ is a lower-triangular matrix of order infinity. At this stage, the Petrov-Galerkin approximation $y_{N}(t)$ of the exact solution $y(t)$ is stated as
\begin{equation}\label{e31}
y_{N}(t)=\sum\limits_{k=0}^{\infty}{c_{k}\bar{J}_{k+\alpha b}^{(0,-\alpha b,\gamma)}(t)}=\underline{c}~\underline{J}^{(0,-\alpha b,\gamma)}=
\underline{c} J \underline{T}_t=\underline{c} J \Psi \underline{\hat{T}}_{t},
\end{equation}
where $\underline{c}=[c_{0},c_{1},\ldots,c_{\hat{N}},0,\ldots]‎, ~\underline{\hat{T}}_{t}=[1, t^{\gamma}, \ldots, t^{\hat{N}\gamma}, \ldots]^{\prime},~\hat{N}=N-\alpha b$, and
\begin{eqnarray}‎\label{e46-2}
\Psi=‎
‎\begin{bmatrix}‎
‎\overbrace{0 \ldots  0}^{\alpha b}&1&0&\cdots& &\\‎
‎\vdots&0&1&0&\cdots&\\‎
‎\vdots&\vdots&0&1&0&\cdots\\
‎\cdots&\cdots&&\ddots&\ddots&\ddots\\
‎\end{bmatrix}‎.
\end{eqnarray}
Meanwhile, the relation \eqref{e54-11} can be restated as
\begin{eqnarray}\label{e33}
‎\nonumber f(t,y(t))&=&\sum\limits_{\substack{\mu=0 \\ \theta=0}}^{\infty}{f_{\mu,\theta}~t^{\mu \gamma}~y^{\theta}(t)}‎, ‎\\‎
‎g(t,s,y(s))&=&\sum\limits_{\substack{\mu,\nu=0 \\ \theta=0}}^{\infty}{g_{\mu,\nu,\theta}~t^{\mu \gamma}s^{\nu \gamma}~y^{\theta}(s)}.
\end{eqnarray}

Inserting the relation \eqref{e31} into \eqref{eq1} and using \eqref{e33}, we have
\begin{equation}\label{e34}
‎\underline{c} J D^{\alpha}_C \underline{T}_t=\sum\limits_{\substack{\mu=0 \\ \theta=0}}^{\infty}{f_{\mu,\theta}~t^{\mu \gamma}~y_{N}^{\theta}(t)}‎+\lambda \sum\limits_{\substack{\mu,\nu=0 \\ \theta=0}}^{\infty}g_{\mu,\nu,\theta}~t^{\mu \gamma} \int_{0}^{t}(t-s)^{{\beta}-1}s^{\nu \gamma}~y_{N}^{\theta}(s)ds.
\end{equation}
Hence, we first compute $ D^{\alpha}_C \underline{T}_{t}$. In this respect, using the relation \cite{39,30,31} $$D^{\alpha}_Ct^{\tau}=\frac{\Gamma(\tau+1)}{\Gamma(\tau-\alpha+1)}t^{\tau-\alpha},~~\tau>0,$$ we can write
\begin{equation}\label{e23}
D^{\alpha}_C \underline{T}_{t}=\Bigg[D^{\alpha}_C t^{\alpha+\gamma i}\Bigg]_{i \ge 0}=
\Bigg[\frac{\Gamma(\alpha+\gamma i+1)}{\Gamma(\gamma i+1)}t^{\gamma i}\Bigg]_{i \ge 0}=\chi \underline{\hat{T}}_{t},
\end{equation}
in which
\[
\chi =
\begin{bmatrix}
\Gamma{(\alpha+1)}&0&0&\ldots\\
0&\frac{\Gamma{(\alpha+\gamma+1)}}{\Gamma{(\gamma+1)}}&0&\cdots\\
0&0&\frac{\Gamma{(\alpha+2\gamma+1)}}{\Gamma{(2\gamma+1)}}&\cdots\\
\vdots&\vdots&0&\ddots \\
\end{bmatrix}.
\]
The key of this strategy is to attain a matrix form for the right-hand side of \eqref{e34} vigorously. In this sense, we first substitute an appropriate matrix representation for $y_{N}^{\theta}(t)$ through the following lemma.
\begin{lemma}\label{lem31}
Assuming $\underline{\underline{c}}=\underline{c} J=[\underline{c}_{0},\underline{c}_{1},\ldots,\underline{c}_{\hat{N}},0,\ldots]$, the following relation holds
\begin{equation*}
y_{N}^{\theta}(t)=\underline{\underline{c}}~\Psi Q^{\theta-1}\underline{\hat{T}}_{t},\quad \theta > 0,
\end{equation*}
where $Q$ indicates the following upper-triangular matrix of order infinity
\[
Q=
\begin{bmatrix}
\underline{\underline{c}} \Psi_{0}&\underline{\underline{c}} \Psi_{1}&\underline{\underline{c}} \Psi_{2}&\ldots\\
0&\underline{\underline{c}} \Psi_{0}&\underline{\underline{c}} \Psi_{1}&\ldots\\
0&0&\underline{\underline{c}} \Psi_{0}&\ldots \\
\vdots&\vdots&\vdots&\ddots
\end{bmatrix},
\]
with $\Psi_{r}=\{\Psi_{m,r}\}_{m=0}^{\infty},~r=0,1,\ldots~$.
\end{lemma}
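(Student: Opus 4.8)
The plan is to recognize $Q$ as the infinite matrix that, in the monomial system $\{t^{r\gamma}\}_{r\ge 0}$, implements multiplication of a finite linear combination of the $t^{r\gamma}$ by the function $y_{N}(t)$; the lemma then follows by iterating this observation $\theta-1$ times and pairing the resulting coefficient vector against $\underline{\hat T}_{t}$.

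First I would fix notation. Writing the row vector $\underline{\underline{c}}\,\Psi=(d_{0},d_{1},d_{2},\ldots)$, the identity \eqref{e31} shows that $d_{r}=\underline{\underline{c}}\,\Psi_{r}$ is precisely the coefficient of $t^{r\gamma}$ in $y_{N}$; concretely $d_{r}=0$ for $r<\alpha b$ by \eqref{e46-2}, while $d_{\alpha b+j}=\underline{c}_{j}$, the shift by $\alpha b=\alpha/\gamma$ columns built into $\Psi$ accounting exactly for the $t^{\alpha}$ prefactor in \eqref{ert25}. Since $\underline{c}$ is finitely supported and $J$ is lower-triangular, $\underline{\underline{c}}=\underline{c}J$ is finitely supported, hence so is the sequence $(d_{r})_{r}$; consequently every matrix product and every pairing with $\underline{\hat T}_{t}$ occurring below is a finite sum and no convergence question arises. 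Reading off the entries in the displayed form of $Q$ then gives $Q_{i,j}=\underline{\underline{c}}\,\Psi_{j-i}=d_{j-i}$ for $j\ge i$ and $Q_{i,j}=0$ otherwise, so $Q$ is the upper-triangular Toeplitz matrix generated by the coefficient sequence of $y_{N}$.

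Next I would argue by induction on $\theta$. For $\theta=1$ the matrix $Q^{0}$ is the identity and $\underline{\underline{c}}\,\Psi\,\underline{\hat T}_{t}=y_{N}(t)$ is exactly \eqref{e31}. For the inductive step set $u^{(\theta)}:=\underline{\underline{c}}\,\Psi\,Q^{\theta-1}$ and assume $u^{(\theta)}_{j}=\sum_{r_{1}+\cdots+r_{\theta}=j}d_{r_{1}}\cdots d_{r_{\theta}}$, which is the coefficient of $t^{j\gamma}$ in $y_{N}^{\theta}(t)$ by the Cauchy product rule. Then the $j$-th entry of $u^{(\theta+1)}=u^{(\theta)}Q$ equals $\sum_{i\le j}u^{(\theta)}_{i}Q_{i,j}=\sum_{i\le j}u^{(\theta)}_{i}d_{j-i}$, i.e.\ the convolution of the coefficient sequences of $y_{N}^{\theta}$ and $y_{N}$, hence the coefficient of $t^{j\gamma}$ in $y_{N}^{\theta+1}(t)$. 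Pairing with $\underline{\hat T}_{t}$ yields $u^{(\theta+1)}\underline{\hat T}_{t}=\sum_{j\ge 0}u^{(\theta+1)}_{j}t^{j\gamma}=y_{N}^{\theta+1}(t)$, which closes the induction and proves the lemma.

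The computation is essentially bookkeeping, so the only point demanding care is the setup: one must check that the column index of $\Psi$ lines up with the Toeplitz shift of $Q$, i.e.\ that $\underline{\underline{c}}\,\Psi_{r}$ is genuinely the $t^{r\gamma}$-coefficient of $y_{N}$ with the correct $\alpha b$ offset, so that indeed $Q_{i,j}=d_{j-i}$. Once $Q$ has been identified with multiplication by $y_{N}$ in the $\{t^{r\gamma}\}$ system --- its upper-triangularity being the statement that such a multiplication never lowers the exponent --- the Cauchy-product induction above is immediate.
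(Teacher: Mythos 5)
Your proof is correct and takes essentially the same approach as the paper's: induction on $\theta$, with the key step being the identification of $Q$ as the upper-triangular Toeplitz matrix implementing multiplication by $y_{N}$ in the system $\{t^{r\gamma}\}_{r\ge 0}$. The paper phrases that step as the identity $\underline{\hat{T}}_{t}\times(\underline{\underline{c}}\,\Psi\,\underline{\hat{T}}_{t})=Q\,\underline{\hat{T}}_{t}$, which is just the column-vector counterpart of your row-vector convolution computation.
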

\begin{proof}
The mathematical induction on $\theta$ is utilized to prove this lemma. For $\theta=1$, the result is obviously valid. We consider that it is valid for $\theta$, and proceed to $\theta+1$ as follows
\begin{eqnarray}\label{e36}
\nonumber y_{N}^{\theta+1}(t)&=&y_{N}^{\theta}(t)\times y_{N}(t)=\big(\underline{\underline{c}}~ \Psi Q^{\theta-1}\underline{\hat{T}}_{t}\big)\times(\underline{\underline{c}} ~\Psi \underline{\hat{T}}_{t})\\
&=&\underline{\underline{c}}~ \Psi Q^{\theta-1}( \underline{\hat{T}}_{t}\times(\underline{\underline{c}} ~\Psi \underline{\hat{T}}_{t})).
\end{eqnarray}

Next, it suffices to demonstrate that
\begin{equation}\label{e35}
 \underline{\hat{T}}_{t}\times(\underline{\underline{c}} ~\Psi \underline{\hat{T}}_{t})=Q \underline{\hat{T}}_{t}.
\end{equation}

For this purpose, we can derive
\begin{eqnarray*}
\nonumber \underline{\hat{T}}_{t}\times(\underline{\underline{c}} ~\Psi \underline{\hat{T}}_{t})&=&\underline{\hat{T}}_{t}) \times \Big(\sum\limits_{h=0}^{\infty}{\sum\limits_{k=0}^{\infty}{\underline{c}_{k}~\Psi_{k,h}~t^{h\gamma}}}\Big)=\Bigg[
\sum\limits_{h=0}^{\infty}{\sum\limits_{k=0}^{\infty}{\underline{c}_{k}~\Psi_{k,h}~t^{(h+m)\gamma}}}\Bigg]_{m=0}^{\infty}\\
&=&\Bigg[\sum\limits_{r=m}^{\infty}{\Big(\sum\limits_{k=0}^{\infty}{\underline{c}_{k}~\Psi_{k,r-m}~t^{r\gamma}}\Big)}\Bigg]_{m=0}^{\infty},
\end{eqnarray*}
whereby it yields \eqref{e35}. Trivially, by inserting \eqref{e35} into \eqref{e36} the desired result can be achieved.
\end{proof}
Now, it is time to employ Lemma \ref{lem31} to convert the first term of the right-hand side of \eqref{e34} into the following vector-matrix form
\begin{eqnarray}\label{e40}
\nonumber \sum\limits_{\substack{\mu=0 \\ \theta=0}}^{\infty}{f_{\mu,\theta}~t^{\mu \gamma}~y_{N}^{\theta}(t)}‎&=&\sum\limits_{\mu=0 }^{\infty}{f_{\mu,0}~t^{\mu \gamma}}+\sum\limits_{\substack{\mu=0 \\ \theta=1}}^{\infty}{f_{\mu,\theta}~t^{\mu \gamma}~y_{N}^{\theta}(t)}\\
\nonumber&=&\underline{f}_{0}\underline{\hat{T}}_{t}+\sum\limits_{\substack{\mu=0 \\ \theta=1}}^{\infty}{f_{\mu,\theta}~t^{\mu \gamma}\underline{\underline{c}}~\Psi Q^{\theta-1}\underline{\hat{T}}_{t}}\\
\nonumber &=&\underline{f}_{0}\underline{\hat{T}}_{t}+\sum\limits_{\theta=1 }^{\infty}{\underline{\underline{c}}~\Psi Q^{\theta-1}\Bigg(\sum\limits_{\mu=0 }^{\infty}f_{\mu,\theta}~\left[t^{(\mu+m) \gamma}\right]_{m=0}^{\infty}\Bigg)}\\
&=&\Bigg(\underline{f}_{0}+\sum\limits_{\theta=1 }^{\infty}{\underline{\underline{c}}~\Psi Q^{\theta-1}F_{\theta}}\Bigg)\underline{\hat{T}}_{t},
\end{eqnarray}
in which $\underline{f}_{0}=[f_{0,0},f_{1,0},\ldots,f_{\hat{N},0},\ldots]$ and $F_{\theta}$ directs an upper-triangular matrix of order infinity with the following components
\begin{equation*}
\big[F_{\theta}\big]_{i,j=0}^{\infty}=\begin{cases}
0, \quad \quad \quad \quad ~~ i > j,\\
f_{j-i,\theta},\quad \quad ~~ i \leq j.
\end{cases}
\end{equation*}
Applying the same strategy to the second term of \eqref{e34} concludes
\begin{multline}\label{e40-1}
\sum\limits_{\substack{\mu,\nu=0 \\ \theta=0}}^{\infty}g_{\mu,\nu,\theta}~t^{\mu \gamma} \int_{0}^{t}(t-s)^{{\beta}-1}s^{\nu \gamma}~y_{N}^{\theta}(s)ds\\
=\sum\limits_{\mu,\nu=0 }^{\infty}g_{\mu,\nu,0}~t^{\mu \gamma} \int_{0}^{t}(t-s)^{{\beta}-1}s^{\nu \gamma}ds+\sum\limits_{\substack{\mu,\nu=0 \\ \theta=1}}^{\infty}g_{\mu,\nu,\theta}~t^{\mu \gamma} \int_{0}^{t}(t-s)^{{\beta}-1}s^{\nu \gamma}~\underline{\underline{c}}~\Psi Q^{\theta-1}\underline{\hat{T}}_{s}ds\\
\hspace{-6cm}=\sum\limits_{\mu,\nu=0 }^{\infty}g_{\mu,\nu,0}~t^{\mu \gamma} \int_{0}^{t}(t-s)^{{\beta}-1}s^{\nu \gamma}ds\\
+\sum\limits_{ \theta=1}^{\infty}\underline{\underline{c}}~\Psi Q^{\theta-1}\Bigg(\sum\limits_{ \mu,\nu=0}^{\infty}g_{\mu,\nu,\theta}~t^{\mu \gamma} \left[\int_{0}^{t}(t-s)^{{\beta}-1}s^{(\nu+m) \gamma}ds\right]_{m=0}^{\infty}\Bigg).
\end{multline}
Evidently, we have \cite{39}
\begin{eqnarray*}
\left[\int_{0}^{t}(t-s)^{{\beta}-1}s^{(\nu+m) \gamma}ds\right]_{m=0}^{\infty}&=&\left[\mathcal{A}_{\nu}^{m}~t^{(\nu+m) \gamma+\beta}\right]_{m=0}^{\infty},\\
\int_{0}^{t}(t-s)^{{\beta}-1}s^{\nu \gamma}ds&=&\mathcal{A}_{\nu}^{0}~t^{\nu \gamma+\beta},
\end{eqnarray*}
in which $\mathcal{A}_{\nu}^{m}=\frac{\Gamma(\beta)\Gamma((\nu+m) \gamma+1)}{\Gamma((\nu+m) \gamma+\beta+1)}$. Therefore, the equality \eqref{e40-1} can be rewritten as
\begin{multline}\label{e41}
\sum\limits_{\substack{\mu,\nu=0 \\ \theta=0}}^{\infty}g_{\mu,\nu,\theta}~t^{\mu \gamma} \int_{0}^{t}(t-s)^{{\beta}-1}s^{\nu \gamma}~y_{N}^{\theta}(s)ds\\
=\sum\limits_{\mu,\nu=0 }^{\infty}g_{\mu,\nu,0}~\mathcal{A}_{\nu}^{0}~t^{(\mu +\nu)\gamma+\beta}+\sum\limits_{ \theta=1}^{\infty}\underline{\underline{c}}~\Psi Q^{\theta-1}\Bigg(\sum\limits_{ \mu,\nu=0}^{\infty}g_{\mu,\nu,\theta} \left[\mathcal{A}_{\nu}^{m}~t^{(\mu+\nu+m) \gamma+\beta}\right]_{m=0}^{\infty}\Bigg)\\
=\Bigg(\underline{K}+\sum\limits_{ \theta=1}^{\infty}\underline{\underline{c}}~\Psi Q^{\theta-1}H_{\theta}\Bigg)\underline{\hat{T}}_{t},\hspace{+7cm}
\end{multline}
along with the infinite-order row vector $\underline{K}$ as
\begin{eqnarray*}
\nonumber \big[\underline{K}\big]_{j=0}^{\infty}&=&\begin{cases}
0,\hspace{+4.2cm}\quad  j < \beta b,\\
\sum\limits_{\nu=0}^{j-\beta b}g_{\nu,j-\beta b -\nu,0}~\mathcal{A}_{j-\beta b-\nu}^{0},\quad \quad \quad ~ j \geq \beta b,
\end{cases}
\\&=&[\overbrace{0 \ldots  0}^{\beta  b},\big[\underline{K}\big]_{\beta b},
\big[\underline{K}\big]_{\beta b+1},\ldots,\big[\underline{K}\big]_{\hat{N}},\ldots],
\end{eqnarray*}
and the infinite-order upper-triangular matrix $H_{\theta}$ with the components
\begin{equation*}
\big[H_{\theta}\big]_{i,j=0}^{\infty}=\begin{cases}
0,\hspace{4.9cm} \quad ~ i \geq j-\beta b+1,\\
\sum\limits_{\nu=0}^{j-i-\beta b}g_{\nu,j-i-\beta b-\nu,\theta}~\mathcal{A}_{j-i-\beta b-\nu}^{i},\quad \quad  \quad ~~ i <j- \beta b+1.
\end{cases}
\end{equation*}

At this stage, we employ the relations \eqref{e23}, \eqref{e40} and \eqref{e41} in \eqref{e34}, and thereby we derive
\begin{equation}\label{e42}
\underline{\underline{c}}~\chi \underline{\hat{T}}_{t}=\Bigg(\underline{f}_{0}+\sum\limits_{\theta=1 }^{\infty}{\underline{\underline{c}}~\Psi Q^{\theta-1}F_{\theta}}\Bigg)\underline{\hat{T}}_{t}+\lambda \Bigg(\underline{K}+\sum\limits_{ \theta=1}^{\infty}\underline{\underline{c}}~\Psi Q^{\theta-1}H_{\theta}\Bigg)\underline{\hat{T}}_{t}.
\end{equation}
Due to the orthogonality of $\{J_k^{(0,\alpha b,\gamma)}\}_{k \geq 0}$, we project \eqref{e42} onto $\{J_k^{(0,\alpha b,\gamma)}\}_{k =0}^{\hat{N}}$. Ultimately, after some simple manipulations, the algebraic form of the Petrov-Galerkin discretization is obtained as
\begin{equation}\label{e44}
\underline{\underline{c}}^{\hat{N}}~\chi^{\hat{N}} =\underline{f}_{0}^{\hat{N}}+\lambda \underline{K}^{\hat{N}}+\sum\limits_{ \theta=1}^{\infty}\underline{\underline{c}}^{\hat{N}}~\Psi^{\hat{N}} (Q^{\theta-1})^{\hat{N}}\Big(F_{\theta}^{\hat{N}}+\lambda H_{\theta}^{\hat{N}}\Big).
\end{equation}
Here, the sign $\hat{N}$ above the matrices and vectors signifies respectively the principle sub-matrices and sub-vectors of order $\hat{N}+1$. Needless to mention, we are able to get the unknown vector through solving the system of $\hat{N} + 1$ non-linear algebraic equations \eqref{e44}. The next section presents an outstanding strategy to overcome this system regardless of its complexity.
\subsection{Solvability analysis}
It is noteworthy that the system of non-linear algebraic equations \eqref{e44} involves high computational costs to be solved, specifically for large degrees of approximation, and it can undoubtedly result in inaccurate solution due to its complexity. In order to cope with this barrier, we aim to provide a productive and well-conditioned implementation that gives the unknown of \eqref{e44} by means of some recurrence relations instead of solving a complex non-linear algebraic system. For this purpose, applying Lemma \ref{lem31} enables us to write
\[
Q=
\begin{bmatrix}
\underline{\underline{c}} \Psi_{0}&\underline{\underline{c}} \Psi_{1}&\underline{\underline{c}} \Psi_{2}&\ldots\\
0&\underline{\underline{c}} \Psi_{0}&\underline{\underline{c}} \Psi_{1}&\ldots\\
0&0&\underline{\underline{c}} \Psi_{0}&\ldots \\
\vdots&\vdots&\vdots&\ddots
\end{bmatrix}=\begin{bmatrix}
‎\overbrace{0 \ldots  0}^{\alpha b}&\underline{c}_{0}&\underline{c}_{1}&\underline{c}_{2}&\ldots\\
\vdots&0&\underline{c}_{0}&\underline{c}_{1}&\ldots\\
\vdots&\vdots&0&\underline{c}_{0}&\ldots \\
\cdots&\cdots&\ddots&\ddots&\ddots
\end{bmatrix}.
\]
Through simple calculations, we observe that $(Q^{\theta-1})^{\hat{N}}$ has the following upper-triangular structure
\begin{eqnarray}\label{e46}
(Q^{\theta-1})^{\hat{N}}=
\nonumber && \begin{bmatrix}
‎\overbrace{0 \ldots  0}^{(\theta-1)\alpha b}&\left(\underline{c}_{0}\right)^{\theta-1}&(\theta-1)\left(\underline{c}_{0}\right)^{\theta-2}\underline{c}_{1}
&\ldots&\ldots\\
\vdots&0&\left(\underline{c}_{0}\right)^{\theta-1}&(\theta-1)\left(\underline{c}_{0}\right)^{\theta-2}\underline{c}_{1}&\ldots\\
\vdots&\vdots&\vdots&\ddots&\ddots \\
0&0&\ddots&0&\left(\underline{c}_{0}\right)^{\theta-1}\\
\vdots&\vdots&\vdots&\ddots&\vdots\\
0&0&0&0&0
\end{bmatrix}\\
\nonumber && =\begin{bmatrix}
\overbrace{0 \ldots  0}^{(\theta-1)\alpha b}&Q_{0,0}^{\theta-1}&Q_{0,1}^{\theta-1}&Q_{0,2}^{\theta-1}&\ldots\\
\vdots&0&Q_{0,0}^{\theta-1}&Q_{0,1}^{\theta-1}&\ldots&\\
\vdots&\vdots&\vdots&\ddots&\ddots\\
0&0&\ddots&0&Q_{0,0}^{\theta-1}\\
\vdots&\vdots&\vdots&\ddots&\vdots\\
0&0&0&0&0
\end{bmatrix},
\end{eqnarray}
where $Q_{0,r}^{\theta-1}$, $r=0,1,\ldots,N-\theta \alpha b$ are non-linear functions of the elements $\underline{c}_{0},~\underline{c}_{1},\ldots,~\underline{c}_{r}$.

In addition, from \eqref{e46-2} and \eqref{e46}, the following upper-triangular matrix of order $\hat{N}+1$ can be derived
\begin{eqnarray*}
\Psi^{\hat{N}} (Q^{\theta-1})^{\hat{N}}
\nonumber && =\begin{bmatrix}
\overbrace{0 \ldots  0}^{\theta \alpha b}&Q_{0,0}^{\theta-1}&Q_{0,1}^{\theta-1}&Q_{0,2}^{\theta-1}&\ldots\\
\vdots&0&Q_{0,0}^{\theta-1}&Q_{0,1}^{\theta-1}&\ldots&\\
\vdots&\vdots&\vdots&\ddots&\ddots\\
0&0&\ddots&0&Q_{0,0}^{\theta-1}\\
\vdots&\vdots&\vdots&\ddots&\vdots\\
0&0&0&0&0
\end{bmatrix}.
\end{eqnarray*}

Consequently, one can be checked that the matrix $\Pi$ defined by
\begin{equation*}
\Pi=\Psi^{\hat{N}} (Q^{\theta-1})^{\hat{N}} \mathcal{B},\quad \mathcal{B}=F_{\theta}^{\hat{N}}+\lambda H_{\theta}^{\hat{N}},
\end{equation*}
has an upper-triangular structure with the components below
\begin{eqnarray*}
\big[\Pi\big]_{i,j=0}^{\hat{N}}=\begin{cases}
0,\hspace{4.65cm} \quad  i \geq j-\theta \alpha b+1,\\
\sum\limits_{r=0}^{j-i-\theta \alpha b}Q_{0,r}^{\theta-1}~\big[\mathcal{B}\big]_{i+r+\theta \alpha b,j},\quad \quad \quad \quad ~i < j-\theta \alpha b+1.
\end{cases}
\end{eqnarray*}

Afterward, considering the structure of $\Pi$, we have
\begin{eqnarray*}
\underline{\underline{c}}^{\hat{N}} \Pi
=\underline{\underline{c}}^{\hat{N}}\begin{bmatrix}
\overbrace{0 \ldots  0}^{\theta \alpha b}&\big[  \Pi\big]_{0,\theta \alpha b}&\big[ \Pi\big]_{0,\theta \alpha b+1}&\ldots&\big[ \Pi\big]_{0,\hat{N}}\\
\vdots&0&\big[  \Pi\big]_{1,\theta \alpha b+1}&\cdots&\big[  \Pi\big]_{1,\hat{N}}\\
\vdots&\vdots&\vdots&\ddots&\ddots \\
0&0&\ddots&0&\big[  \Pi\big]_{\hat{N}-\theta \alpha b,\hat{N}}\\
\vdots&\vdots&\vdots&\ddots&\vdots\\
0&0&0&0&0
\end{bmatrix}.
\end{eqnarray*}

Now, due to the above relation, one can deduce
\begin{eqnarray*}
\nonumber \bigg[\underline{\underline{c}}^{\hat{N}} \Pi\bigg]_{j=0}^{\hat{N}}&=&\begin{cases}
0,\hspace{+2.75cm} j < \theta \alpha b,\\
\sum\limits_{i=0}^{j-\theta \alpha b}\underline{c}_{i}\big[\Pi\big]_{i,j},\quad \quad \quad j \geq \theta \alpha b,
\end{cases}
\\&=&[\overbrace{0 \ldots  0}^{\theta \alpha b},\bold{Z}_{0}^{\theta},
\bold{Z}_{1}^{\theta},\ldots,\bold{Z}_{\hat{N}-\theta \alpha b}^{\theta}],
\end{eqnarray*}
where $\bold{Z}_{r}^{\theta},$ $r=0,1,\ldots,\hat{N}-\theta \alpha b$ are non-linear functions of the elements $\underline{c}_{0},~\underline{c}_{1},\ldots,~\underline{c}_{r}$.

Eventually we attain
\begin{equation}\label{e48}
\sum_{\theta=1}^{\infty}[\overbrace{0 \ldots  0}^{\theta \alpha b},\bold{Z}_{0}^{\theta},
\bold{Z}_{1}^{\theta},\ldots,\bold{Z}_{\hat{N}-\theta \alpha b}^{\theta}]=[\overbrace{0 \ldots  0}^{\alpha b},\bold{\tilde{Z}}_{0},
\bold{\tilde{Z}}_{1},\ldots,\bold{\tilde{Z}}_{\hat{N}- \alpha b}],
\end{equation}
in which $\bold{\tilde{Z}}_{r},$ $r=0,1,\ldots,\hat{N}-\alpha b$ are non-linear functions in terms of the components  $\underline{c}_{0},~\underline{c}_{1},\ldots,~\underline{c}_{r}$.
We substitute \eqref{e48} into \eqref{e44} and compute the unknown elements of the unknown vector $\underline{\underline{c}}$ through the recurrence relations bellow
\begin{eqnarray*}
\underline{c}_{0}&=&\frac{1}{\Gamma{(\alpha+1)}}(f_{0,0}+\lambda [\underline{K}]_{0}),\\
\vdots &~&\\
\underline{c}_{ \alpha b-1}&=&\frac{\Gamma{(\gamma (\alpha b-1)+1)}}{\Gamma{(\alpha+\gamma ( \alpha b-1)+1)}}(f_{ \alpha b-1,0}+\lambda [\underline{K}]_{ \alpha b-1} ),\\
\underline{c}_{\alpha b}&=&\frac{\Gamma{(\gamma (\alpha b)+1)}}{\Gamma{(\alpha+\gamma ( \alpha b)+1)}}
\Big(f_{ \alpha b,0}+\lambda [\underline{K}]_{ \alpha b}+\bold{\tilde{Z}}_{0}\Big),\\
\vdots &~&\\
\underline{c}_{\hat{N}}&=&\frac{\Gamma{(\gamma \hat{N} +1)}}{\Gamma{(\alpha+\gamma \hat{N}+1)}}
\Big(f_{\hat{N},0}+\lambda [\underline{K}]_{\hat{N}}+\bold{\tilde{Z}}_{\hat{N}- \alpha b}\Big).
\end{eqnarray*}

Ultimately, solving the lower-triangular system $\underline{\underline{c}}^{\hat{N}}=\underline{c}^{\hat{N}} J^{\hat{N}}$ fuels the main unknown $\underline{c}^{\hat{N}}$ whereby we can obtain the Petrov-Galerkin approximation \eqref{e31}.
\section{Error estimate}\label{sec3}
This section is dedicated to giving the convergence result of the introduced method through employing an appropriate error bound in $L^{2}$-norm.

‎Let us introduce
$\Pi_{N}^{(0,\alpha b,\gamma)}$ as the $L_{w^{(0,\alpha b,\gamma)}}^2(\Lambda)$-orthogonal projection relevant
to the fractional Jacobi space
$$\mathcal{F}_{N}^{(0,\alpha b,\gamma)}=\text{Span}\{J_k^{(0,\alpha b,\gamma)}: ~k=0,1,\ldots,N\}.$$

Meanwhile, for $p \in L_{w^{(0,\alpha b,\gamma)}}^2(\Lambda)$, we can write
\[
\bigg(p-\Pi_{N}^{(0,\alpha b,\gamma)} p,\phi\bigg)_{w^{(0,\alpha b,\gamma)}}=0,\quad \forall \phi \in \mathcal{F}_{N}^{(0,\alpha b,\gamma)}.
\]

To obtain an upper bound of truncation error $\Pi_{N}^{(0,\alpha b,\gamma)}p-p$, we first define the space
\begin{equation*}
H_{w^{(\rho,\eta)}}^m(I)=\{P : \|P\|_{m,w^{(\rho, \eta)}}< \infty,~ m \in \mathbb{N}\},
\end{equation*}
along with
‎\[‎\|P\|_{m,w^{(\rho,\eta)}}^2=\sum\limits_{l=0}^{m}{\|\partial_s^l P\|_{w^{\rho+l,\eta+l}}^2},\quad \lvert P \rvert_{m,w^{(\rho,\eta)}}=\|\partial_s^m P\|_{w^{\rho+m,\eta+m}},
‎\]
considered as the norm and semi-norm.

In this step, if we assume that the coordinate transformation $s = 2 (\frac{t}{T})^{\gamma}-1 $ associate the function $p(t)$ with $P(s)$, their derivatives will be connected as follows
\begin{eqnarray*}‎
‎D_t p:=\partial_s P(s)&=&\partial_s t‎~ ‎\partial_t p,\\‎
‎D_t^2 p:=\partial_s^2 P(s)&=&\partial_s t~\partial_t(D_t p)~,\\‎
‎\vdots‎
‎\\‎
‎D_t^n p:=\partial_s^n P(s)&=&\partial_s t~\partial_t(\partial_s t‎~ ‎\partial_t(\cdots(\partial_s t‎~ ‎\partial_t p)\cdots))‎,
‎\end{eqnarray*}‎
in which $\partial_s t=\frac{T}{2\gamma}\big(\frac{t}{T}\big)^{1-\gamma}$. Moreover, it can be deduced that
‎\begin{eqnarray*}
\|P(s)\|_{w^{(\rho,\eta)}}^2&=&\int_{I}{\lvert P(s)\rvert^2 w^{(\rho,\eta)}(s)ds}=‎
‎d^{(\rho,\eta)}\int_{\Lambda}{\lvert p(t)\rvert^2 w^{(\rho,\eta,\gamma)}(t)dt}\\
&=&‎d^{(\rho,\eta)}\|p(t)\|_{w^{(\rho,\eta,\gamma)}}^2,
\end{eqnarray*}
\begin{eqnarray*}
‎\|\partial_s^m P(s)\|_{w^{(\rho,\eta)}}^2&=&\int_{I}{\lvert \partial_s^m P(s)\rvert^2 w^{(\rho,\eta)}(s)ds}=d^{(\rho,\eta)}\int_{\Lambda}{\lvert D_t^m p(t)\rvert^2 w^{(\rho,\eta,\gamma)}(t)dt}\\
&=&d^{(\rho,\eta)}\|D_t^m p(t)\|_{w^{(\rho,\eta,\gamma)}}^2,
‎\end{eqnarray*}‎
where ‎$d^{(\rho,\eta)}=\frac{2^{\rho+\eta+1}}{T}$.

Finally, we define the transformed space‎
‎ \[H_{w^{(\rho,\eta,\gamma)}}^m(\Lambda)=\{p:~ \|p\|_{m,w^{(\rho,\eta,\gamma)}} < \infty\},
    \]
associated with the norm and semi-norm
\begin{eqnarray*}
&&\|p\|_{m,w^{(\rho,\eta,\gamma)}}^2=\sum\limits_{l=0}^{m}{d^{(\rho+l,\eta+l)}\|D_t^l p\|_{w^{(\rho+l,\eta+l,\gamma)}}^2},\\
&&\lvert p\rvert_{m,w^{(\rho,\eta,\gamma)}}=\sqrt{d^{(\rho+m,\eta+m)}}\|D_t^m p\|_{w^{(\rho+m,\eta+m,\gamma)}},
\end{eqnarray*}
and from Theorem 3.2 of \cite{am1}, the following estimation holds
\begin{equation}\label{e52}
 \|\Pi_{N}^{(0,\alpha b,\gamma)}p-p\|_{w^{(0,\alpha b,\gamma)}} \leq C N^{-m}\lvert p\rvert_{m,w^{(0,\alpha b,\gamma)}},~~m \ge 0.
\end{equation}

It is time to exhibit the convergence theorem directing the proper error bound for $y(t)-y_{N}(t)$ in $L^2$-norm.
\begin{theorem}[Convergence]\label{thm4}
‎Let $y_{N}(t)$ given by \eqref{e31} be the approximate solution of \eqref{eq1}‎. If we have
\begin{enumerate}
\item{$\int_{0}^{t}(t-s)^{{\beta}-1}g(t,s,y(s))ds \in H_{w^{(0,\alpha b,\gamma)}}^{\varepsilon}(\Lambda)$ such that
\[D^{\varepsilon+1}_{t} \left(\int_{0}^{t}(t-s)^{{\beta}-1}g(t,s,y(s))ds\right)\in C(\Lambda),~~\varepsilon \ge 0.\]}
\item{$f \in H_{w^{(0,\alpha b,\gamma)}}^{\varsigma}(\Lambda)$ such that $D^{\varsigma+1}_{t} f \in C(\Lambda),~~\varsigma \ge 0$}.
\end{enumerate}
Then the following upper bound holds for sufficiently large values of $N$
‎\begin{align*}‎
\Vert e_{N}(t)\Vert \leq C \left(\hat{N}^{-\varepsilon} |\int_{0}^{t}(t-s)^{{\beta}-1}g(t,s,y(s))ds|_{\varepsilon,w^{(0,\alpha b,\gamma)}}+\hat{N}^{-\varsigma}|f|_{\varsigma,w^{(0,\alpha b,\gamma)}}\right),
‎\end{align*}‎
‎where $e_{N}(t)=y(t)-y_{N}(t)$ dictates the error function, and $C>0$ denotes a generic constant independent of $N$.
\end{theorem}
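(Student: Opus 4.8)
The plan is to convert the convergence question into an \emph{a priori} estimate for the residual of the Volterra reformulation \eqref{r3}. Write $\mathcal{G}z(t)=\int_{0}^{t}(t-s)^{\beta-1}g(t,s,z(s))\,ds$, so that \eqref{r3} reads $y=\psi+I^{\alpha}\bigl(f(\cdot,y)+\lambda\,\mathcal{G}y\bigr)$. Since the trial functions $\bar J_{k}^{(0,-\alpha b,\gamma)}$ satisfy the initial conditions of \eqref{eq1}, the approximation $y_{N}$ in \eqref{e31} reproduces them exactly; moreover $y_{N}\in\operatorname{Span}\{t^{\alpha},\dots,t^{N\gamma}\}$ forces $D_{C}^{\alpha}y_{N}\in\operatorname{Span}\{1,t^{\gamma},\dots,t^{\hat N\gamma}\}=\mathcal{F}_{\hat N}^{(0,\alpha b,\gamma)}$, because $\alpha b\gamma=\alpha$ and $D_{C}^{\alpha}t^{\tau}\propto t^{\tau-\alpha}$. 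Hence the projection onto $\{J_{k}^{(0,\alpha b,\gamma)}\}_{k=0}^{\hat N}$ producing \eqref{e44} is equivalent to $y_{N}=\psi+I^{\alpha}\Pi_{\hat N}^{(0,\alpha b,\gamma)}\bigl(f(\cdot,y_{N})+\lambda\,\mathcal{G}y_{N}\bigr)$. Subtracting the two identities and inserting $\pm\,\Pi_{\hat N}^{(0,\alpha b,\gamma)}\bigl(f(\cdot,y)+\lambda\,\mathcal{G}y\bigr)$ gives, for $e_{N}=y-y_{N}$,
\[
e_{N}=I^{\alpha}\bigl(I-\Pi_{\hat N}^{(0,\alpha b,\gamma)}\bigr)\bigl(f(\cdot,y)+\lambda\,\mathcal{G}y\bigr)
+I^{\alpha}\Pi_{\hat N}^{(0,\alpha b,\gamma)}\bigl(f(\cdot,y)-f(\cdot,y_{N})+\lambda(\mathcal{G}y-\mathcal{G}y_{N})\bigr).
\]

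Next I would bound the two pieces in the $w^{(0,\alpha b,\gamma)}$-weighted $L^{2}$-norm, using three routine facts: $I^{\alpha}$ and the weakly singular operator $h\mapsto\int_{0}^{t}(t-s)^{\beta-1}h(s)\,ds$ are bounded on that space (Young's inequality, since $t^{\alpha-1},t^{\beta-1}\in L^{1}[0,T]$), and $\Pi_{\hat N}^{(0,\alpha b,\gamma)}$ is a non-expansive orthogonal projection. For the first piece I then apply \eqref{e52} to $p=f(\cdot,y)$ with $m=\varsigma$ and to $p=\mathcal{G}y$ with $m=\varepsilon$; this bounds it by $C\bigl(\hat N^{-\varsigma}\,|f|_{\varsigma,w^{(0,\alpha b,\gamma)}}+\hat N^{-\varepsilon}\,|\mathcal{G}y|_{\varepsilon,w^{(0,\alpha b,\gamma)}}\bigr)$, and this is precisely where hypotheses (1)--(2) and the continuity of $D_{t}^{\varsigma+1}f$, $D_{t}^{\varepsilon+1}\mathcal{G}y$ enter — they guarantee the two semi-norms on the right are finite. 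For the second piece I invoke the Lipschitz bounds of Theorem~\ref{thm3}, namely $|f(\cdot,y)-f(\cdot,y_{N})|\le L_{1}|e_{N}|$ and $|\mathcal{G}y-\mathcal{G}y_{N}|\le L_{2}\int_{0}^{t}(t-s)^{\beta-1}|e_{N}(s)|\,ds$, together with the same boundedness facts, to get a bound of the form $C_{\star}\|e_{N}\|$ with $C_{\star}$ independent of $N$.

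The main obstacle is closing the resulting self-referential inequality $\|e_{N}\|\le C\bigl(\hat N^{-\varsigma}\,|f|_{\varsigma,w^{(0,\alpha b,\gamma)}}+\hat N^{-\varepsilon}\,|\mathcal{G}y|_{\varepsilon,w^{(0,\alpha b,\gamma)}}\bigr)+C_{\star}\|e_{N}\|$, since naive absorption of the $C_{\star}\|e_{N}\|$ term would require the unassumed smallness $C_{\star}<1$. I would remove this obstruction with the same device used in the existence proof of Theorem~\ref{thm3}: re-run the whole estimate with the exponentially rescaled norm $\|\varphi\|_{\delta}$ (or a weighted-$L^{2}$ analogue $\|\varphi\|_{\delta,w}^{2}=\int_{\Lambda}|\varphi(t)|^{2}e^{-2\delta t}w^{(0,\alpha b,\gamma)}(t)\,dt$), under which the constants accompanying the fractional integrations pick up factors $\delta^{-\alpha}$ and $\delta^{-\alpha-\beta}$ that are driven below $1$ by choosing $\delta$ large; equivalently, one applies a generalized (weakly singular) Gronwall inequality to the pointwise form of the residual identity. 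Because on the bounded interval $\Lambda$ the $\delta$-weighted and unweighted norms are equivalent, the asserted bound then follows with a new generic constant $C$.

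Finally, the hypothesis "for sufficiently large $N$" is needed only so that $\hat N=N-\alpha b\ge 1$ and the negative powers $\hat N^{-\varepsilon}$, $\hat N^{-\varsigma}$ are meaningful; moreover, under the analyticity assumptions of Theorem~\ref{th202} the functions $f(\cdot,y)$ and $\mathcal{G}y$ are as regular as needed for $\varepsilon$ and $\varsigma$ to be taken arbitrarily large, so the estimate upgrades to the advertised spectral (exponential) decay of $\|e_{N}\|$.
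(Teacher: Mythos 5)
Your overall architecture (Volterra reformulation, projection identity for $y_N$, splitting into a projection-error piece plus a Lipschitz piece, then closing a self-referential inequality) parallels the paper, but your decomposition differs in a way that creates a genuine gap at the closing step. By inserting $\pm\,\Pi_{\hat N}^{(0,\alpha b,\gamma)}\bigl(f(\cdot,y)+\lambda\mathcal{G}y\bigr)$ you place the projection \emph{in front of} the Lipschitz difference, so your second piece is $I^{\alpha}\Pi_{\hat N}^{(0,\alpha b,\gamma)}\bigl(f(\cdot,y)-f(\cdot,y_N)+\lambda(\mathcal{G}y-\mathcal{G}y_N)\bigr)$. Neither of your two proposed devices closes this. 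A pointwise weakly singular Gronwall inequality does not apply, because $\Pi_{\hat N}^{(0,\alpha b,\gamma)}$ is not pointwise dominated: you cannot pass from $|\Pi_{\hat N}h(t)|$ to anything controlled by $|h(t)|$ or by $\int_0^t(t-s)^{\alpha-1}|e_N(s)|\,ds$. And the exponential-rescaling trick of Theorem \ref{thm3} fails here because $\Pi_{\hat N}^{(0,\alpha b,\gamma)}$ is an orthogonal (hence non-expansive) projection only with respect to the weight $w^{(0,\alpha b,\gamma)}$; in the $\delta$-weighted norm $\|\cdot\|_{\delta,w}$ it is no longer orthogonal and its operator norm is not bounded uniformly in $\delta$ (and $N$), so the $\delta^{-\alpha}$ gain from $I^{\alpha}$ can be swallowed by the loss from the projection, and you cannot force $C_{\star}<1$ by taking $\delta$ large. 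Smallness of $C_{\star}$ is genuinely unavailable from the hypotheses.

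The paper sidesteps this by the opposite split: it writes $D_C^{\alpha}e_N=(f-\bar f)+\lambda(\mathcal{G}y-\mathcal{G}y_N)+e_{\Pi_{\hat N}^{(0,\alpha b,\gamma)}}\bigl(\bar f+\lambda\mathcal{G}y_N\bigr)$ with $\bar f=f(\cdot,y_N)$, so the Lipschitz differences appear \emph{unprojected} and, after applying $I^{\alpha}$, are removed by the weakly singular Gronwall lemma (Lemma 6 of \cite{rrrh35}) with no smallness requirement, leaving $\|e_N\|\le C\|R\|$ where $R$ is the projection error of the $y_N$-quantities. The self-reference then reappears only when converting $|\mathcal{G}y_N|_{\varepsilon,w^{(0,\alpha b,\gamma)}}$ and $|\bar f|_{\varsigma,w^{(0,\alpha b,\gamma)}}$ back to the exact-solution semi-norms, and there it carries the decaying prefactors $\hat N^{-\varepsilon}$, $\hat N^{-\varsigma}$, so it is absorbed for sufficiently large $N$. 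This also shows your closing remark is off: ``sufficiently large $N$'' is not merely to make $\hat N\ge 1$; it is exactly what permits the absorption of the $C(\hat N^{-\varepsilon}C_1L_2+\hat N^{-\varsigma}C_2L_2)\|e_N\|$ term. To repair your argument, either adopt the paper's split, or justify a bound for $\|\Pi_{\hat N}^{(0,\alpha b,\gamma)}\|$ in the rescaled norm that is uniform in $\delta$ and $N$ --- which you currently do not have.
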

\begin{proof}
Based on the devised numerical approach in Section \ref{sec2}, we get the following operator equation
\begin{equation*}
\Pi_{\hat{N}}^{(0,\alpha b,\gamma)} \left(‎D^{\alpha}_{C} y_{N}(t)-f(t,y_{N}(t))-\lambda \int_{0}^{t}(t-s)^{{\beta}-1}g(t,s,y_{N}(s))ds\right)=0,
\end{equation*}
and equivalently we have
\begin{equation}\label{e50}
D^{\alpha}_{C} y_{N}(t)=\Pi_{\hat{N}}^{(0,\alpha b,\gamma)} \big(f(t,y_{N}(t))\big)+\lambda \Pi_{\hat{N}}^{(0,\alpha b,\gamma)} \left(\int_{0}^{t}(t-s)^{{\beta}-1}g(t,s,y_{N}(s))ds\right),
\end{equation}
since we have  $D^{\alpha}_{C} y_{N}(t) \in \text{Span}\{J_0^{(0,\alpha b,\gamma)}, J_1^{(0,\alpha b,\gamma)},...,J_N^{(0,\alpha b,\gamma)}\}$. Subtracting \eqref{e50} from \eqref{eq1} yields
\begin{eqnarray*}
&&D^{\alpha}_{C}e_{N}(t)=f(t,y(t))-\Pi_{\hat{N}}^{(0,\alpha b,\gamma)} \big(f(t,y_{N}(t))\big)\\
&&+\lambda \Bigg(\int_{0}^{t}(t-s)^{{\beta}-1}g(t,s,y(s))ds-\Pi_{\hat{N}}^{(0,\alpha b,\gamma)} \left(\int_{0}^{t}(t-s)^{{\beta}-1}g(t,s,y_{N}(s))ds\right)\Bigg),
\end{eqnarray*}
which can be restated as
\begin{multline}\label{eqq4}
 D^{\alpha}_{C}e_{N}(t)=(f-\bar{f})+\lambda \int_{0}^{t}(t-s)^{{\beta}-1}\big(g(t,s,y(s))-g(t,s,y_{N}(s))\big)ds\\
+e_{\Pi_{\hat{N}}^{(0,\alpha b,\gamma)}}\left(\bar{f}+\lambda \int_{0}^{t}(t-s)^{{\beta}-1}g(t,s,y_{N}(s))ds\right),
\end{multline}
where $e_{\Pi_{\hat{N}}^{(0,\alpha b,\gamma)}}(z)=z-\Pi_{\hat{N}}^{(0,\alpha b,\gamma)}(z)$ and $\bar{f}=f(t,y_{N}(t))$. We enforce the Riemann-Liouville fractional integral operator of order $\alpha$ on \eqref{eqq4} and utilize the relation $I^{\alpha}D_{C}^{\alpha}e_{N}(t)=e_{N}(t)$ whereby we deduce
\begin{eqnarray*}
&&e_{N}(t)=I^{\alpha}(f-\bar{f})\\
&&+\lambda \int_{0}^{t}(t-s)^{{\alpha}-1}\left(\int_{0}^{s}(s-\tau)^{{\beta}-1}\big(g(s,\tau,y(\tau))-g(s,\tau,y_{N}(\tau))\big)d\tau\right)ds+R,
\end{eqnarray*}
along with
\[
R=I^\alpha e_{\Pi_{\hat{N}}^{(0,\alpha b,\gamma)}}\left(\bar{f}+\lambda \int_{0}^{t}(t-s)^{{\beta}-1}g(t,s,y_{N}(s))ds\right).
\]

Due to the Lipschitz assumption on f and g, we arrive at
\begin{align}\label{e53}
\nonumber \lvert e_{N}(t)\rvert &\leq L_{1}I^{\alpha}\lvert e_{N}(t)\rvert+\frac{\lambda L_{2}}{\Gamma(\alpha)}\int_{0}^{t}(t-s)^{{\alpha}-1}\left(\int_{0}^{s}(s-\tau)^{{\beta}-1}\lvert e_{N}(\tau)\rvert d\tau\right)ds+\lvert R \rvert\\
&=L_{1}I^{\alpha}\lvert e_{N}(t) \rvert+\lambda L_{2}\Gamma(\beta) I^{\alpha+\beta}\lvert e_{N}(t)\rvert +\lvert R \rvert.
\end{align}
In addition to this, we can write
\begin{multline}\label{e54}
L_{1}I^{\alpha}\lvert e_{N}\rvert+\lambda L_{2}\Gamma(\beta) I^{\alpha+\beta}\lvert e_{N}\rvert \leq \max(L_{1},\lambda L_{2}\Gamma(\beta))\left(I^{\alpha}\lvert e_{N} \rvert+ I^{\alpha+\beta}\lvert e_{N}\rvert\right)\\
 =\max(L_{1},\lambda L_{2}\Gamma(\beta))\Bigg(\int_{0}^{t}(t-s)^{{\alpha}-1}\left(\frac{1}{\Gamma(\alpha)}+\frac{1}{\Gamma(\alpha+\beta)}(t-s)^{\beta}\right)\lvert e_{N}(s)\rvert ds\Bigg)\\
 \leq \aleph \int_{0}^{t}(t-s)^{{\alpha}-1}\lvert e_{N}(s)\rvert ds,\hspace{+6.5cm}
\end{multline}
in which
\[
\aleph=\max(L_{1},\lambda L_{2}\Gamma(\beta))\left(\frac{1}{\Gamma(\alpha)}+\frac{T^{\beta}}{\Gamma(\alpha+\beta)}\right).
\]
Substituting \eqref{e54} into \eqref{e53} and employing Gronwall's inequality, i.e., Lemma 6 of \cite{rrrh35}, it can be concluded that
‎\begin{align*}‎
‎\|e_{N}(t)\| \leq C \left\|R\right\|‎.
‎\end{align*}‎
We rewrite the above inequality in the following sense
‎\begin{equation}\label{e390}
‎\|e_{N}(t)\|  \leq C
‎\Vert e_{\Pi_{\hat{N}}^{(0,\alpha b,\gamma)}}\left(\bar{f}+\lambda \int_{0}^{t}(t-s)^{{\beta}-1}g(t,s,y_{N}(s))ds\right) \Vert_{w^{(0,\alpha b,\gamma)}},
‎\end{equation}‎
due to the Cauchy-Schwarz inequality and some manipulations.

Currently, the suitable upper bounds are sought for each term of the right-hand side of \eqref{e390}. In this regard, utilizing the estimation \eqref{e52} and the first-order Taylor formula give
\begin{eqnarray}\label{e55}
\nonumber &&\|e_{\Pi_{\hat{N}}^{(0,\alpha b,\gamma)}}\left(\lambda \int_{0}^{t}(t-s)^{{\beta}-1}g(t,s,y_{N}(s))ds\right)\|_{w^{(0,\alpha b,\gamma)}} \\
\nonumber&&\leq C \hat{N}^{-\varepsilon} \lvert \int_{0}^{t}(t-s)^{{\beta}-1}g(t,s,y_{N}(s))ds\rvert_{\varepsilon,w^{(0,\alpha b,\gamma)}}\\
\nonumber&&\leq  C \hat{N}^{-\varepsilon} \Bigg(\lvert \int_{0}^{t}(t-s)^{{\beta}-1}g(t,s,y(s))ds\rvert_{\varepsilon,w^{(0,\alpha b,\gamma)}}+C_{1} \|g(t,s,y(s))-g(t,s,y_{N}(s)) \|\Bigg) \\
&&\leq C \hat{N}^{-\varepsilon} \Bigg(\lvert \int_{0}^{t}(t-s)^{{\beta}-1}g(t,s,y(s))ds\rvert_{\varepsilon,w^{(0,\alpha b,\gamma)}}+C_{1} L_{2} \| e_{N}\|\Bigg),
\end{eqnarray}
due to the Lipschitz assumption on g. Here, $C_{1}>0$ is a generic constant independent of $N$. Proceeding the same way as \eqref{e55}, we derive
\begin{equation}\label{e56}
\|e_{\Pi_{\hat{N}}^{(0,\alpha b,\gamma)}}(\bar{f})\|_{w^{(0,\alpha b,\gamma)}} \leq C \hat{N}^{-\varsigma} \lvert \bar{f}\rvert_{\varsigma,w^{(0,\alpha b,\gamma)}} \leq C \hat{N}^{-\varsigma} \Big(\lvert f\rvert_{\varsigma,w^{(0,\alpha b,\gamma)}}+C_{2}L_{2}\| e_{N}\|\Big),
\end{equation}
where $C_{2}>0$ denotes a generic constant independent of $N$.

Inserting \eqref{e55} and \eqref{e56} into \eqref{e390} concludes
\begin{multline*}
\Vert e_{N}(t)\Vert -C( \hat{N}^{-\varepsilon}C_{1} L_{2}+\hat{N}^{-\varsigma}C_{2}L_{2}) \| e_{N}(t)\| \\
 \leq C \left(\hat{N}^{-\varepsilon} \lvert\int_{0}^{t}(t-s)^{{\beta}-1}g(t,s,y(s))ds\rvert_{\varepsilon,w^{(0,\alpha b,\gamma)}}+\hat{N}^{-\varsigma}\lvert f\rvert_{\varsigma,w^{(0,\alpha b,\gamma)}}\right).
\end{multline*}

Evidently, the desired result can be attained for sufficiently large values of $N$.
\end{proof}
\section{Numerical illustration}\label{sec4}
This section is devoted to confirming the effectiveness and productivity of the proposed implementation through demonstrating the numerical experiments derived from ‎solving some non-linear weakly singular FIEDs‎‎. In this respect, this section is organized in the following sense
\begin{itemize}
  \item[$\bullet$] To assess the computational capability of the introduced strategy, we illustrate some essential properties including numerical errors $\bold{e}(N)=\|e_{N}(t)\|$ and CPU-time elapsed.

\item[$\bullet$] The stability of the method is also examined via approximating the highly oscillatory solution of a test problem associated with the long domain $\Lambda$ and large values of $N$.

\item[$\bullet$] The predominance of the suggested approach is assessed by comparing our results to those obtained by a modification of hat functions (MHFs) introduced in \cite{18}.
\end{itemize}
The computation is conducted by means of Mathematica v11.2‎, ‎running in a computer system with an Intel (R) Core (TM) i5-4210U CPU @ 2.40 GHz‎‎.
‎\begin{example} \label{exm1}‎
Let us consider the non-linear weakly singular FIDEs
‎\begin{align*}
‎‎\begin{cases}‎
‎D^{\frac{3}{2}}_C y(t)=f(t)+ \int_{0}^{t}(t-s)^{-\frac{3}{4}}g(t,s,y(s))ds,\quad ‎t \in [0,1],\\‎
y(0)=0.
‎\end{cases}‎
‎\end{align*}‎
The source function $f(t)$ is chosen in a way that the exact solution is
\begin{equation*}
y(t)=E_{\frac{3}{2}}(t^{\frac{3}{2}})-1,
\end{equation*}
and
\begin{align*}‎
g(t,s,y(s))=\frac{1}{2}sJ_{0}(t^{\frac{7}{4}})\sin{(y(s))}+s^{\frac{5}{2}}y^{4}(s)+t^{\frac{1}{2}}s^{\frac{1}{4}}.
‎\end{align*}‎
\end{example}

Trivially, we have $y(t)=O(t^{\frac{3}{2}})$ which coincide with the result of Theorem \ref{th202}. $E_{c}(t)$ denotes Mittag-Leffler function‎, and for integer number $d$, $J_{d}(t)$ is known as Bessel function‎.

 We assess this problem by means of the proposed implementation and report the results in Table \ref{tab1} and Fig. \ref{fig1000}. From Table \ref{tab1}, it is reasoned that the approximate solutions are highly accurate, it is because the numerical errors are declined regularly in the short CPU-time used particularly for the large degrees of approximation $N$. In addition, the semi-log depiction of the numerical errors demonstrated in Fig. \ref{fig1000} confirms the well-known exponential accuracy predicting in Theorem \ref{thm4} caused by the linear variations of the semi-log depiction of errors versus $N$ (notice that we have
$\varepsilon,~\varsigma=\infty$ in Theorem \ref{thm4}).
\begin{table}[htbp]
{\footnotesize
\caption{The numerical consequences of Example \ref{exm1} for different degrees of approximation $N$.}\label{tab1}
\begin{center}
\setlength\tabcolsep{4pt}
\begin{tabular}{@{}c|cc@{}}
\hline
\hline

 $N$ &  $\bold{e}(N)$ & $\textit{CPU-time (sec)}$ \\
 \hline
 \hline
8 &6.93E-2&1.51\\
16 & 6.44E-3& 18.92 \\
32 &6.51E-7 &113.13 \\
64 & 2.04E-15& 555.19\\
\hline
\hline
\end{tabular}\end{center}}
\end{table}
\begin{figure}[htbp]
 \centering
\includegraphics[width=6cm]{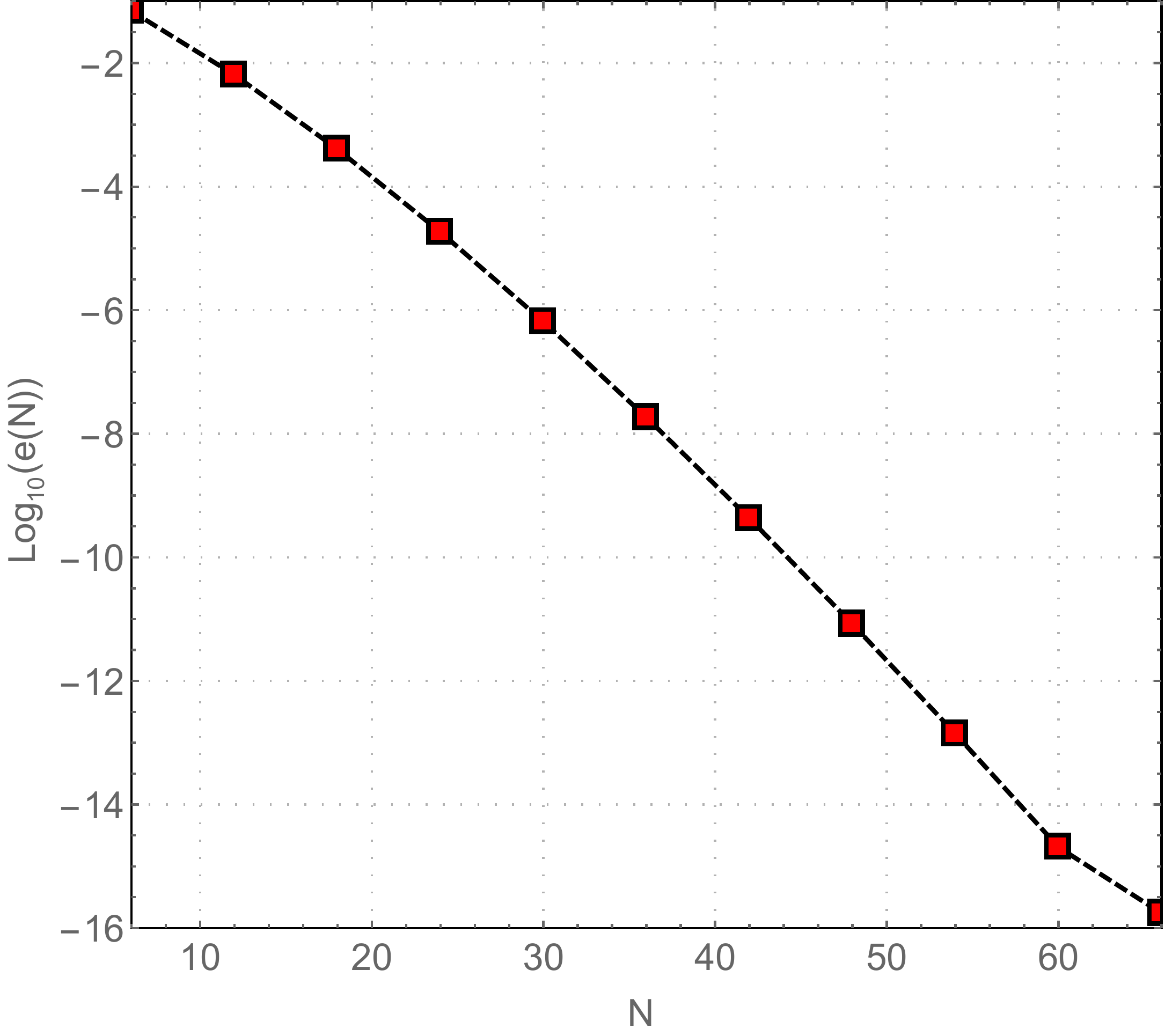}
 \caption{Semi-log depiction of the numerical errors versus $N$ for Example \ref{exm1}.}
\label{fig1000}‎
\end{figure}
‎\begin{example}\label{exm4}‎
Let us give the following highly oscillatory non-linear weakly singular FIDEs
\begin{equation*}
‎‎\begin{cases}‎
‎D^{\frac{1}{2}}_C y(t)=f(t)+ \int_{0}^{t}(t-s)^{-\frac{1}{2}}g(t,s,y(s))ds,\quad ‎t \in [0,2 \Pi],\\‎
y(0)=0.
‎\end{cases}‎
\end{equation*}
The forcing function $f(t)$ is selected in a way that
\begin{equation*}
g(t,s,y(s))=t^{2}s^{\frac{3}{2}}y^{2}(s)+5 {_{2}F_{2}}(\{\frac{1}{4},\frac{3}{4}\};\{\frac{1}{5},\frac{2}{5}\};‎ -‎\frac{t^{\frac{1}{2}}}{2}),
\end{equation*}
and $y(t)=t \sin 100 t^{\frac{1}{2}}$.
\end{example}

${_{\xi}F_{\xi}}(\{a_{1},\ldots,a_{\xi}\};\{b_{1},\ldots,b_{\xi}\};t)$ denotes the generalized hypergeometric function‎. We apply the introduced scheme to this problem and the derived consequences are illustrated in Fig. \ref{fg4} and Fig. \ref{fg2500}. Needless to mention, the highly oscillatory behavior of the solution may cause instability in approximation, particularly for large degrees of approximation $N$. Regardless of this fact, the numerical results demonstrate that our scheme, however, produces highly accurate approximate solutions. Indeed, From Fig. \ref{fg4}, it can be concluded that the method is on the path of convergence for $N>680$, and the effective computational performance of our strategy let the numerical errors decline regularly specifically for large degrees of approximation $N$. Furthermore, the well-known exponential accuracy is confirmed caused by the linear variations of semi-log depiction of errors versus $N$.

‎
\begin{figure}[!tbp]
  \centering
  \begin{minipage}[b]{0.4\textwidth}
    \includegraphics[width=.9\textwidth]{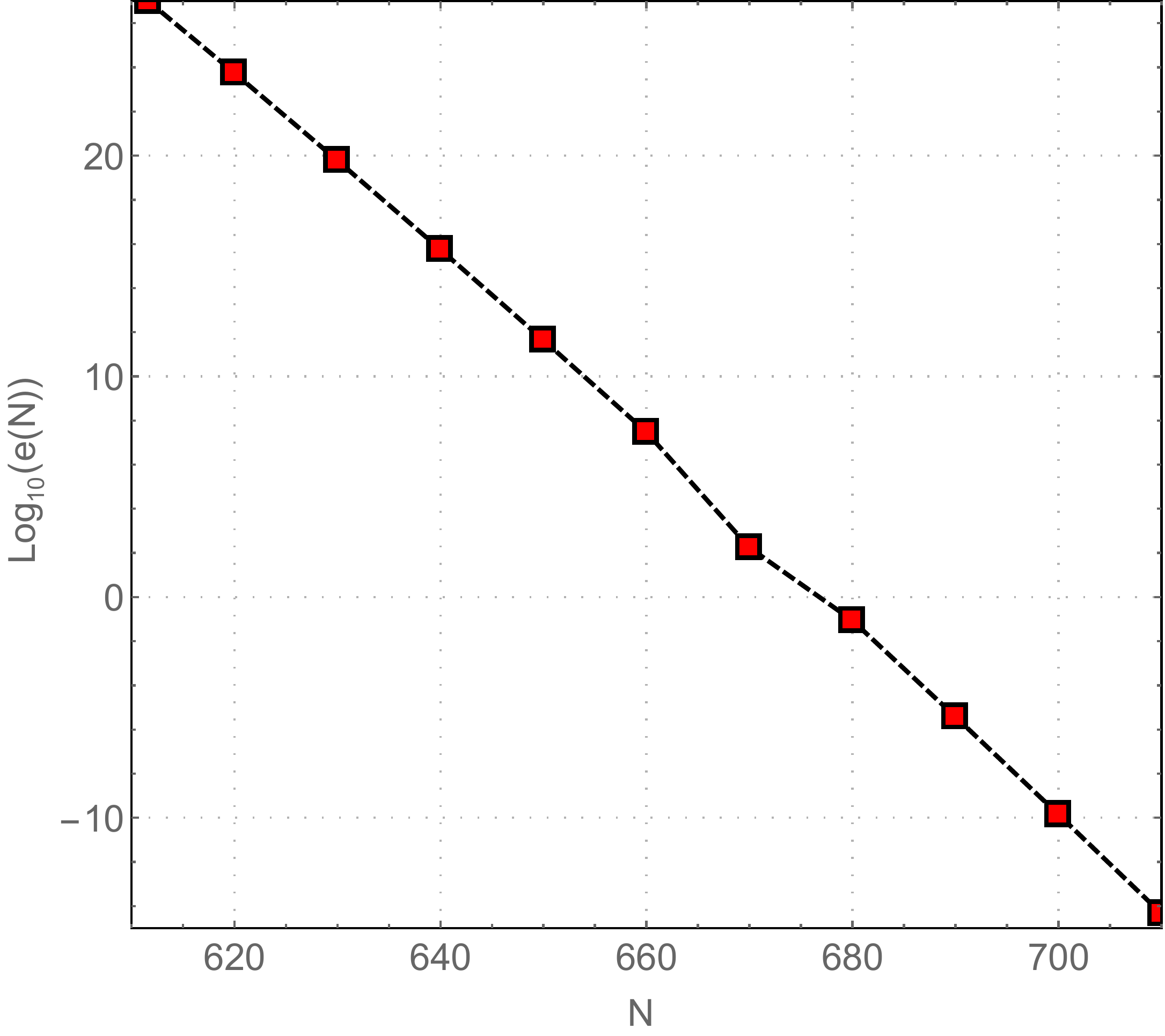}
    \caption{Semi-log depiction of the numerical errors versus $N$ for Example \ref{exm4}.}\label{fg4}
  \end{minipage}
  \hfill
  \begin{minipage}[b]{0.4\textwidth}
    \includegraphics[width=1.3\textwidth]{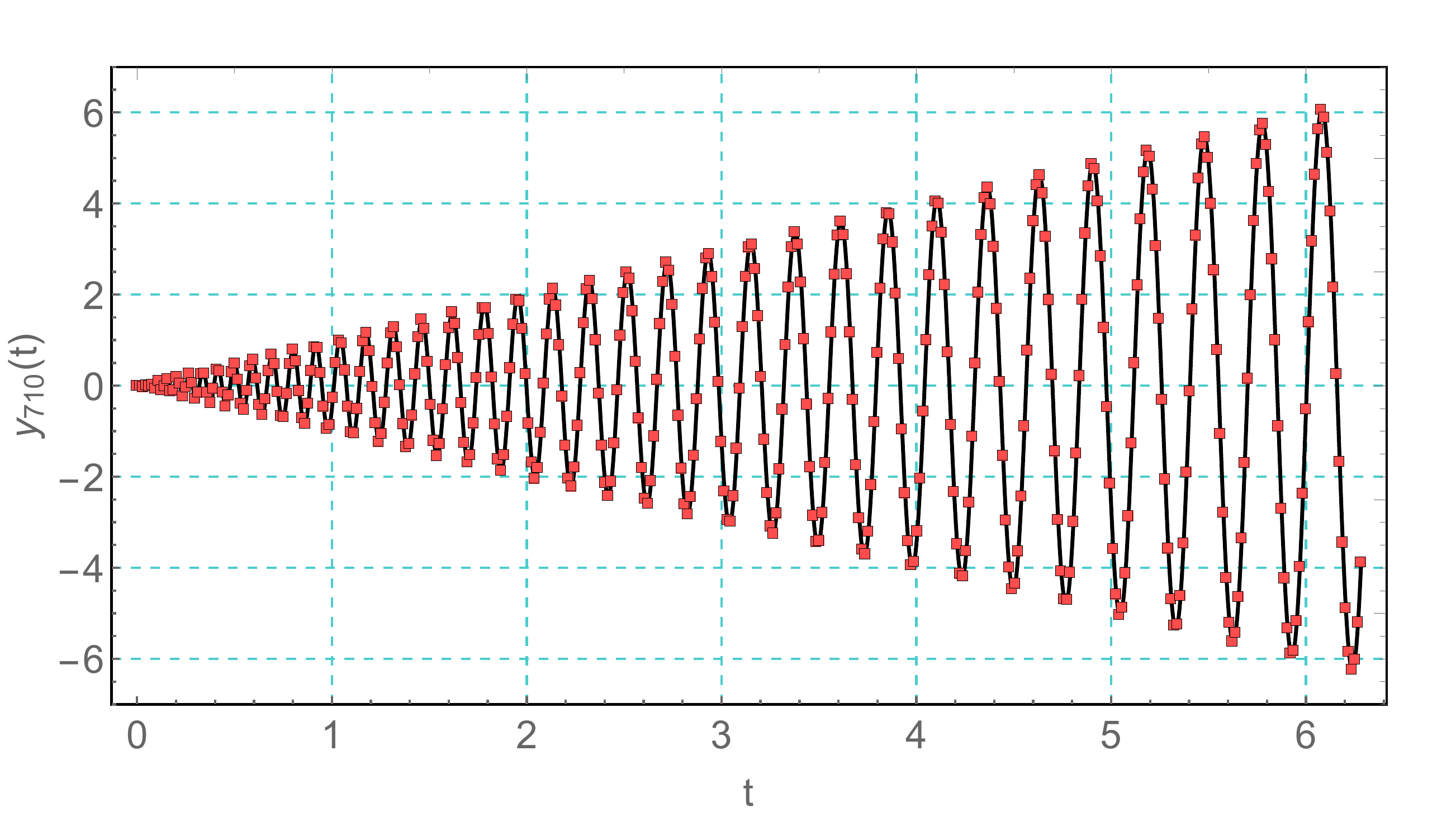}
    \caption{Graphs of the exact solution (solid line) and the approximate solution (red squares) of Example \ref{exm4} for $N=710$.}\label{fg2500}
  \end{minipage}
  \hspace{+1.2cm}
\end{figure}

\begin{example}\label{exm2}
\cite{18} Consider the following non-linear weakly singular FIDEs
‎\begin{align}\label{eq45}‎
‎‎\begin{cases}‎
‎D^{\frac{2}{3}}_C y(t)=f(t,y(t))+ \int_{0}^{t}(t-s)^{-\frac{1}{2}}y^{2}(s)ds,\quad ‎t \in [0,1],\\‎
y(0)=0,
‎\end{cases}‎
‎\end{align}‎
where
\begin{align*}
f(t,y(t))=\frac{3 \Gamma(\frac{1}{2})}{4 \Gamma(\frac{11}{6})}t^{\frac{5}{6}}-t^{\frac{5}{2}}-\frac{32}{35}t^{\frac{7}{2}}+t y(t).
\end{align*}\end{example}

The exact solution of this problem is $y(t)=t^{\frac{3}{2}}$. Considering $\gamma=\frac{1}{6}$, this problem is solved via the implemented scheme, and the exact solution is obtained with the degree of approximation $9$, in machine precision. Also, in \cite{18}, the approximate solution of \eqref{eq45} is computed by means of a modification of hat functions (MHFs), and the absolute errors at some selected grid points derived in \cite{18} for various values of $n$ are listed in Table \ref{tab2}. In this method, $n$ directs the number of uniform sub-intervals. We refer the reader to \cite{18} for more details about this method. Comparison results justify the dominance of our suggested method over the presented scheme in \cite{18}.
\begin{table}[htbp]
{\footnotesize
  \caption{The absolute errors at some selected grid points of Example \ref{exm2} with different values of $n$ utilizing the suggested method in \cite{18}.}  \label{tab2}
\begin{center}
\begin{tabular}{c|c}
\hline
\hline

 $n$ &  $Error$  \\
 \hline
 \hline
2 &7.09E-2   \\
4 &1.40E-2 \\
8 &3.31E-3 \\
16 &8.85E-4   \\
32 &2.46E-4 \\
64 &6.92E-5 \\
128 &2.42E-5 \\
256 &8.57E-6  \\
512 &3.03E-6   \\
1024 &1.07E-6 \\
\hline
\hline
\end{tabular}
\end{center}
}
\end{table}
\section{Conclusions}\label{sec5}

A comprehensive survey of the existence, uniqueness, and smoothness properties of the solution of \eqref{eq1} was presented, and in particular, was demonstrated that the solution has a singularity at the origin. Taking into account the smoothness of the solution we proposed a creative strategy based on the spectral Petrov-Galerkin method to solve \eqref{eq1} numerically. This strategy offered some recurrence relations for deriving the approximate solution rather than solving a non-linear complex algebraic system. Finally, our implementation drove us to verify the spectral accuracy of the proposed method through the convergence theorem and approximating some illustrative examples. This strategy enables us to attack a vast majority of non-linear fractional functional equations, which would possibly motivate us to do research on it in the future.

\section*{Acknowledgements}

This work is funded by national funds through the FCT - Funda\c c\~ao para a Ci\^encia e a Tecnologia, I.P., under the scope of the projects UIDB/00297/2020 and UIDP/00297/2020 (Center for Mathematics and Applications).


\begin{thebibliography}{100}
\bibitem{8}A. Arikoglu, I. Ozkol, Solution of fractional integro-differential equations by using fractional differential transform method, Chaos Solitons Fractals, 40 (2009), 521-529.
\bibitem{9}D. Abbaszadeh, M. T. Kajani, M. Momeni, M. Zahraei, M. Maleki, Solving fractional Fredholm integro–differential equations using Legendre wavelets, Appl. Numer. Math., 166 (2021), 168-185.
\bibitem{fo2}D. Baleanu, K. Diethelm, E. Scalas, J. J. Trujillo, Fractional Calculus: Models and Numerical Methods, 2nd edition, World Scientific, Singapore, 2016.
\bibitem{19}J. Biazar, K. Sadri, Solution of weakly singular fractional integro-differential equations by using a new operational approach, J. Comput. Appl. Math., 352 (2019), 453-477.
 \bibitem{11}A. Da\c{s}cıo\u{g}lu, D. Varol, Laguerre polynomial solutions of linear fractional integro-differential equations, Math. Sci., 15 (2021), 47-54.
\bibitem{39}K. Diethelm, The Analysis of Fractional Differential Equations, Springer, Berlin, 2010.
 \bibitem{14}M. R. Eslahchi, M. Dehghan, M. Parvizi, Application of the collocation method for solving nonlinear fractional integro-differential equations, J. Comput. Appl. Math., 257 (2014), 105-128.
 \bibitem{sevom}A. Faghih, P. Mokhtary, A novel Petrov-Galerkin method for a class of linear systems of fractional differential equations, Appl. Numer. Math., 169 (2021), 396-414.
\bibitem{am1}A. Faghih, P. Mokhtary, Non-linear System of Multi-order Fractional Differential Equations: Theoretical Analysis and a Robust Fractional Galerkin Implementation, ‎J‎. ‎Sci‎. ‎Comput.‎, 91 (2022), 1-30.
\bibitem{rrrh35}A‎. ‎Faghih‎, ‎P‎. ‎Mokhtary‎, An efficient formulation of Chebyshev Tau method for constant coefficients systems of multi-order FDEs, ‎J‎. ‎Sci‎. ‎Comput.‎, ‎82 (2020)‎, ‎no‎. ‎1‎, ‎6‎.
 \bibitem{26rv}B. Y. Guo, J. Shen, L. L. Wang, Optimal spectral-Galerkin methods using generalized Jacobi polynomials, J. Sci. Comput., 27 (2006), no. 1-3, 305-322.
 \bibitem{15}L. Huang, X. F. Li, Y. Zhao, X. Y. Duan, Approximate solution of fractional integro-differential equations by Taylor expansion method, Comput. Math. Appl., 62 (2011), 1127-1134.
 \bibitem{30}A‎. ‎A‎. ‎Kilbas‎, ‎H‎. ‎M‎. ‎Srivastava‎, ‎J‎. ‎J‎. ‎Trujillo‎, ‎Theory and Applications of Fractional‎ Differential Equations‎, ‎Elsevier‎, ‎2006‎.
\bibitem{1}K. Kumar, R. K. Pandey, S. Sharma, Comparative study of three numerical schemes for fractional integro-differential equations, J. Comput. Appl. Math., 315 (2017), 287-302.
 \bibitem{17}X. Ma, C. Huang, Numerical solution of fractional integro-differential equations by a hybrid collocation method, Appl. Math. Comput., 219 (2013), 6750-6760.
  \bibitem{ref1}J. T. Machado, V. Kiryakova, F. Mainardi, Recent history of fractional calculus, Commun. Nonlinear Sci. Numer. Simul., 16 (2011), 1140–1153.
 \bibitem{fo1} K. S. Miller, B. Ross, An Introduction to the Fractional Calculus and Fractional Differential Equations, Wiley, New York, 1993.
 \bibitem{16}P. Mokhtary, Reconstruction of exponentially rate of convergence to Legendre collocation solution of a class of fractional integro-differential equations, J. Comput. Appl. Math., 279 (2015), 145-158.
 \bibitem{24}P. Mokhtary, Numerical analysis of an operational Jacobi Tau method for fractional weakly singular integro-differential equations,  Appl. Numer. Math., 121 (2017), 52-67.
 \bibitem{18}S. Nemati, P. M. Lima, Numerical solution of nonlinear fractional integro-differential equations with weakly singular kernels via a modification of hat functions, Appl. Math. Comput., 327 (2018), 79-92.
 \bibitem{21}S. Nemati, S. Sedaghat, I. Mohammadi, A fast numerical algorithm based on the second kind Chebyshev polynomials for fractional integro-differential equations with weakly singular kernels, J. Comput. Appl. Math., 308 (2016), 231-242.
 \bibitem{3}A. Pedas, E. Tamme, M. Vikerpuur, Spline collocation for fractional integro-differential equations, International Conference on Finite Difference Methods, 2014, 315-322.
 \bibitem{31}‎I‎. ‎Podlubny‎, ‎Fractional Differential Equations‎, ‎New‎ York‎, ‎Academic Press‎, ‎1999‎.
 \bibitem{spl}E. A. Rawashdeh, Numerical solution of fractional integro-differential equations by collocation method, Appl. Math. Comput., 176 (2006), 1-6.
 \bibitem{ref2} Y. A. Rossikhin, M. V. Shitikova, Applications of fractional calculus to dynamical problems of linear and nonlinear hereditary
mechanics of solids, Appl. Mech. Rev., 50 (1997), 15–67.
 \bibitem{10}G. Shi, Y. Gong, M. Yi, Alternative Legendre polynomials method for nonlinear fractional integro-differential equations with weakly singular kernel, J. Math., 2021 (2021).
 \bibitem{29}J. Shen, T. Tang, L. L. Wang, Spectral Methods: Algorithms, Analysis and Applications, Springer, 2011.
 \bibitem{ref3}H. Sun, Y. Zhang, D. Baleanu, W. Chen, Y. Chen, A new collection of real world applications of fractional calculus in science and engineering, Commun. Nonlinear Sci. Numer. Simul., 64 (2018), 213–231.
 \bibitem{r2-2} V. V. Uchaikin, Fractional Derivatives for Physicists and Engineers, Volume II Applications, Springer, Berlin, 2013.
\bibitem{23}Y. Wang, L. Zhu, SCW method for solving the fractional integro-differential equations with a weakly singular kernel, Appl. Math. Comput., 275 (2016), 72-80.
 \bibitem{20}M. Yi, L. Wang, J. Huang, Legendre wavelets method for the numerical solution of fractional integro-differential equations with weakly singular kernel, Appl. Math. Model., 40 (2016), 3422-3437.
 \bibitem{35}M. A. Zaky, E. H. Doha, J. A. Tenreiro Machado, A spectral framework for fractional variational problems based on fractional Jacobi functions, Appl. Numer. Math., 132 (2018), 51-72.
 \bibitem{22}J. Zhao, J. Xiao, N. J. Ford, Collocation methods for fractional integro-differential equations with weakly singular kernels, Numer. Algorithms, 65 (2014), 723-743.
 \bibitem{sc}L. Zhu, Q. Fan, Numerical solution of nonlinear fractional-order Volterra integro-differential equations by SCW, Commun. Nonlinear Sci. Numer. Simul., 18 (2013), 1203-1213.
\end{thebibliography}
\end{document}